\definecolor{darkgreen}{rgb}{0,0.5,0}
\definecolor{darkblue}{rgb}{0,0,0.7}
\definecolor{darkred}{rgb}{0.9,0.1,0.1}
\newtheorem{theorem}{Theorem}
\newtheorem{proposition}[theorem]{Proposition}
\newtheorem{lemma}[theorem]{Lemma}
\theoremstyle{definition}
\newtheorem{remark}[theorem]{Remark}
\newcommand{\cref}[1]{Corollary~\ref{c.#1}}
\newcommand{\eref}[1]{(\ref{e.#1})}
\numberwithin{equation}{section}
\numberwithin{theorem}{section}
\newcommand{\Z}{\mathbb{Z}}
\newcommand{\R}{\mathbb{R}}
\newcommand{\T}{\mathbb{T}}
\newcommand{\ep}{\varepsilon}
\newcommand{\test}[1][]{%
\ifthenelse{\equal{#1}{}}{omitted}{given}%
}
\renewcommand{\d}[1]{\ensuremath{\operatorname{d}\!{#1}}}
\DeclarePairedDelimiter{\norm}{\lVert}{\rVert}
\DeclareMathOperator{\sign}{sign}
\newcommand{\jap}[1]{\left\langle {#1} \right\rangle}
\renewcommand{\bar}{\overline}
\renewcommand{\part}{\partial}
\def\f {\frac}
\def\rd {\partial}
\def\ls {\lesssim}
\def\de {\delta}
\def\i {\infty}
\def\alp {\alpha}
\def\bt {\beta}
\def\ep {\epsilon}
\newcommand{\ud}{\mathrm{d}}
\def\la {\langle}
\def\ra {\rangle}
\def \vp {\varphi}
\begin{document}
\title[Phase mixing in a confining potential]{Phase mixing for solutions to \\ 1D transport equation in a confining potential}

\author[S. Chaturvedi]{Sanchit Chaturvedi}
\address[Sanchit Chaturvedi]{Department of Mathematics, Stanford University, 450 Jane Stanford Way, Bldg 380, Stanford, CA 94305, USA}
\email{sanchat@stanford.edu}
\author[Jonathan Luk]{Jonathan Luk}
\address[Jonathan Luk]{Department of Mathematics, Stanford University, 450 Jane Stanford Way, Bldg 380, Stanford, CA 94305, USA}
\email{jluk@stanford.edu}
\keywords{}
\subjclass[2010]{}
\date{\today}
\maketitle

\vspace{-2ex}
\begin{center}{\it\large Dedicated to the memory of Robert Glassey}
\end{center}
%\vspace{1ex}

\begin{abstract}
Consider the linear transport equation in $1$D under an external confining potential $\Phi$:
\begin{equation*}
\rd_t f + v \rd_x f - \rd_x \Phi  \rd_v f = 0.
\end{equation*}
For $\Phi = \f {x^2}2 + \f{\ep x^4}2$ (with $\ep >0$ small), we prove
phase mixing and quantitative decay estimates for $\rd_t \varphi := - \Delta^{-1} \int_{\R} \rd_t f \, \ud v$, with an inverse polynomial decay rate $O(\la t\ra^{-2})$. In the proof, we develop a commuting vector field approach, suitably adapted to this setting. We will explain why we hope this is relevant for the nonlinear stability of the zero solution for the Vlasov--Poisson system in $1$D under the external potential $\Phi$.
\end{abstract}
\section{Introduction}
Consider the linear transport equation in $1$D 
\begin{equation}\label{eq:transport}
\rd_t f + v \rd_x f - \rd_x \Phi  \rd_v f = 0,
\end{equation}
for an unknown function $f: [0,\infty) \times \mathbb R_x \times \mathbb R_v \to \mathbb R_{\geq 0}$
with a smooth external confining potential $\Phi:\mathbb R \to \mathbb R$. 
%\begin{itemize}
%\item $\Phi(0)=0$\\
%\item $-C\leq \frac{\Phi_x}{\sqrt{\Phi}}\leq C$ for some $C>0$.
%\end{itemize}

The following is the main result of this note:
\begin{theorem}\label{thm:main}
Let $\ep>0$ and $\Phi(x) = \f {x^2}2 + \f{\ep x^4}2$. Consider the unique solution $f$ to \eqref{eq:transport} with initial data $f \restriction_{t=0} = f_0$ such that 
\begin{itemize}
\item $f_0: \mathbb R_x \times \mathbb R_v \to \mathbb R_{\geq 0}$ is smooth, and
\item there exists $c_s >0$ such that $\mathrm{supp}(f_0) \subseteq \{ (x,v):  c_s \leq \f{v^2}2 + \Phi(x) \leq c_s^{-1} \}$.
\end{itemize}

Then, for $\ep$ sufficiently small, there exists $C>0$ depending on $\ep$ and $c_s$ such that the following estimate holds:
$$\sup_{x\in \R} |\rd_t \varphi|(t,x) \leq C \la t\ra^{-2} \sup_{(x,v) \in \mathbb R\times \mathbb R} \sum_{|\alp|+ |\bt| \leq 2} |\rd_x^\alp \rd_v^\bt f_0|(x,v),$$
where $\varphi$ is defined by
\begin{equation}\label{eq:phi}
\rd_{xx}^2\varphi(t,x) =  \int_{\R} f(t,x,v)\, \ud v,\quad \varphi(t,0) = \rd_x \varphi(t,0) = 0.
\end{equation}
\end{theorem}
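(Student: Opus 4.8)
The plan is to reduce the pointwise bound on $\partial_t\varphi$ to a quantitative phase-mixing estimate for the velocity moments of $f$, and then to extract the decay using vector fields adapted to the Hamiltonian flow of $H(x,v)=\tfrac{v^2}{2}+\Phi(x)$. Write $\rho=\int_{\mathbb R}f\,dv$ and $j=\int_{\mathbb R}vf\,dv$. Integrating \eqref{eq:transport} in $v$ (the $\partial_v$-term integrates to zero by the compact support of $f$) gives the continuity equation $\partial_t\rho+\partial_x j=0$, so differentiating \eqref{eq:phi} in $t$ yields $\partial_{xx}\partial_t\varphi=-\partial_x j$. Thus $\partial_t\varphi$ is, up to an affine function of $x$ fixed by $\varphi(t,0)=\partial_x\varphi(t,0)=0$, a spatial primitive of $-j$. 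Since $\Phi$ is confining, conservation of $H$ along characteristics confines $\mathrm{supp}\,f(t,\cdot,\cdot)$ to the fixed annulus $\mathcal A=\{c_s\le H\le c_s^{-1}\}$, so $\rho$ and $j$ are supported in a fixed bounded $x$-interval. The statement therefore reduces to $\sup_x\big|\int_0^x j(t,y)\,dy\big|\lesssim\langle t\rangle^{-2}$. I emphasize that it is the primitive of $j$ that must be estimated: $j$ itself decays more slowly, and the extra $x$-integration is what yields the sharp rate.

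Next I would set up the flow. On $\mathcal A$ every orbit of $H$ is periodic, so I introduce energy--angle variables $(\theta,E)$ in which $T=\partial_t+\omega(E)\partial_\theta$ and $f(t,\theta,E)=f_0(\theta-\omega(E)t,E)$, with frequency $\omega(E)=2\pi/T(E)$. For the harmonic oscillator $\Phi=x^2/2$ the motion is isochronous, $\omega\equiv 1$ and $\omega'\equiv 0$, and there is no mixing; the point of the quartic term is to break this. A perturbative computation gives $\omega(E)=1+c\,\epsilon E+O(\epsilon^2)$ with $c\neq0$, hence a nondegenerate twist $|\omega'(E)|\gtrsim\epsilon$ on $\mathcal A$ once $\epsilon$ is small --- this is precisely where the smallness of $\epsilon$, and the $\epsilon$-dependence of $C$, enter. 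A convenient exact representation of the current is $j(t,x)=\int_{\Phi(x)}^{c_s^{-1}}\big[f_0(\theta_+(x,E)-\omega(E)t,E)-f_0(\theta_-(x,E)-\omega(E)t,E)\big]\,dE$, where $\theta_\pm(x,E)$ are the two angles at which the energy-$E$ orbit crosses $\{x=\mathrm{const}\}$; here the factor $v$ has cancelled the Jacobian weight $dv=dE/|v|$, so no singular weight survives, and, because $f_0$ is smooth and supported in $\mathcal A$, $f_0$ and all its derivatives vanish on $\partial\mathcal A$, killing the boundary terms in any integration by parts in $E$.

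To extract decay I would use two vector fields commuting with $T$: the flow generator $\mathcal L=v\partial_x-\Phi'(x)\partial_v$, for which $\partial_\theta=\omega^{-1}\mathcal L$ and $\mathcal L f=\omega\,\partial_\theta f$, and the secular twist field $Z=\partial_E+\omega'(E)t\,\partial_\theta$. Both satisfy $[T,\mathcal L]=[T,Z]=0$, and one checks directly that $Z^k f=(\partial_E^k f_0)(\theta-\omega t,E)$ remains bounded for every $k$. Combining $Zf$ with $\mathcal L f$ and using the manifestly smooth field $N=\Phi'\partial_x+v\partial_v$ (with $NH=v^2+\Phi'^2$), a computation gives the identity $Nf=-\omega'(E)\,t\,(v^2+\Phi'^2)\,\partial_\theta f+O(1)$. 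Since $v^2+\Phi'^2$ vanishes only at the origin $E=0\notin\mathcal A$, it is bounded below on $\mathcal A$, so this identity lets me trade one $\partial_\theta$-derivative for a factor $\langle t\rangle^{-1}$ with no turning-point singularity, at the cost of one derivative of $f_0$. Iterating this non-stationary-phase step twice gives $\langle t\rangle^{-2}$ decay away from the degenerate loci and consumes exactly two derivatives of $f_0$, matching the hypothesis $|\alpha|+|\beta|\le2$.

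The hard part is the degenerate set on which the twist-based integration by parts fails: the turning points $\{v=0\}$, where $\theta_+$ and $\theta_-$ coalesce and $\partial_E\theta_\pm\sim(E-\Phi(x))^{-1/2}$, and the potential minimum $\{x=0\}$. Desingularizing the turning point with $s=\sqrt{E-\Phi(x)}$ and using the oddness in $s$ of the branch difference, a stationary-phase analysis shows that the turning-point contribution to $j$ decays only like $\langle t\rangle^{-3/2}$ --- which is exactly why the theorem is phrased for $\partial_t\varphi$ and not for $j$. The saving is that the $x$-integration relating $\partial_t\varphi$ to $j$ carries the oscillatory factor $e^{-ik\omega(\Phi(y))t}$, stationary precisely at $y=0$; the associated $y$-stationary-phase gain of $\langle t\rangle^{-1/2}$ restores the sharp $\langle t\rangle^{-2}$, while the nondegenerate bulk already decays at least this fast. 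Executing this interplay rigorously --- keeping every estimate smooth across $\{v=0\}$ by working with $\mathcal L$, $Z$ and $N$ rather than with the singular $(\theta,E)$ chart, and carefully tracking the confluence of the two degeneracies --- is where I expect the bulk of the work to lie.
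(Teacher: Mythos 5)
Your framework coincides with the paper's own: the reduction via the continuity equation to a primitive of the current $j$ is exactly Lemma~\ref{lem:phi_t}, your twist bound $|\omega'(E)|\gtrsim\epsilon$ is Lemma~\ref{lem:c-prime-bound}, and your field $Z=\partial_E+\omega'(E)t\,\partial_\theta$ is precisely the paper's commuting field $Y$ (up to sign). The gap is that the actual content of the theorem --- the $\langle t\rangle^{-2}$ bound for the bulk term, Proposition~\ref{prop:bulk-term} in the paper --- is not proved in your proposal: it is replaced by two asserted stationary-phase facts (a $\langle t\rangle^{-3/2}$ turning-point contribution to $j$, and a $\langle t\rangle^{-1/2}$ stationary-phase gain at $y=0$ from the $x$-integration), and you yourself defer their rigorous execution as ``the bulk of the work.'' Moreover, the advertised interplay between the two degeneracies cannot occur as described. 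Since $f_0$ is supported in $\{H\geq c_s\}$, in your representation $j(t,y)=\int_{\Phi(y)}^{c_s^{-1}}[f_0(\theta_+-\omega t,E)-f_0(\theta_--\omega t,E)]\,dE$ the integrand vanishes identically near the singular endpoint $E=\Phi(y)$ whenever $\Phi(y)<c_s$; hence the turning-point contribution is supported in $\{|y|\geq y_*\}$ with $\Phi(y_*)=c_s$, while the unique stationary point of $y\mapsto\omega(\Phi(y))$ is $y=0$, deep inside the region where that contribution is zero. On the support of the turning-point terms the phase is \emph{uniformly} non-stationary ($|\omega'(\Phi(y))\Phi'(y)|\gtrsim \epsilon$ there), so the $(-3/2)+(-1/2)=-2$ bookkeeping does not describe the actual obstruction; the mechanism limiting the rate is misidentified, and no estimate replaces it.

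Two further points of comparison. First, your reduction silently discards the affine piece: the normalization gives $\partial_t\varphi(x')=-\int_0^{x'}j(y)\,dy+x'\,j(t,0)$, and the term $j(t,0)$ requires its own decay estimate --- the paper devotes Proposition~\ref{prop:term-x=0} to it (fundamental theorem of calculus in $Q$, Poincar\'e, then two applications of $\partial_Q=\frac{1}{c'(K)t}(Y+\partial_K)$ with integration by parts in $K$). Second, the paper handles your ``hard part'' with no stationary phase at all: after Fubini it introduces an antiderivative $\bar g$ with $\partial_Q^2\bar g=\partial_Q\bar f$ (Lemma~\ref{lem:f-to-g}, removing the zero Fourier mode), writes $\partial_Q=\frac{1}{c'(K)t}(Y+\partial_K)$, and integrates by parts in $K$ twice; the turning-point difficulty appears there as boundary terms at the energy $K=\mathfrak{H}_{Q'}$, and these \emph{cancel exactly} between the two regions, while the kernels $\int_{\mathfrak{Q}_K}^{\pi/2}\sin\chi\,dQ$ keep two bounded $K$-derivatives because the vanishing of $\sin\chi$ at the turning point compensates the blow-up of $\partial_K\mathfrak{Q}_K$. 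Any rigorous version of your route (e.g.\ integrating your field $N=\Phi'\partial_x+v\partial_v$ by parts over the region $\{0\leq y\leq x'\}$, to which $N$ is not tangent) would have to produce and control exactly these boundary terms; identifying and exploiting their cancellation is the step your proposal is missing.
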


A few remarks of the theorem are in order.

\begin{remark}[Nonlinear Vlasov--Poisson system]
The reason that we are particularly concerned with $\rd_t \varphi$ is that it appears to be the quantity relevant for the stability of the
zero solution for the nonlinear Vlasov--Poisson system in $1$D; see Section~\ref{sec:VP}.

It should be noted that $\varphi$ itself is not expect to decay to $0$ (since $\int_{\R} f\, \ud v \geq 0$). Thus the decay for $\rd_t \varphi$ can be viewed as a measure of the rate that $\varphi$ approaches the limit $\lim_{t\to +\infty} \varphi(t,x)$.

\end{remark}

\begin{remark}[Derivatives of $\rd_t\varphi$]
For the applications on the Vlasov--Poisson system, one may also wish to obtain estimates for the derivatives of $\rd_t \varphi$. It is easy to extend our methods to obtain
$$|\rd_x \rd_t\varphi |\ls \la t\ra^{-1},\quad |\rd_x^2 \rd_t \varphi|\ls 1.$$
%Notice if we instead consider $\rd_t \rho := \int_{\R} \rd_t f \, \ud v$, the decay rate appears to be slower (at least using our methods).
%Furthermore, note that if, instead, we consider more general integrals of the form $\int_{\R} f b(v) \, \ud v$ for some function $b$, even when such an integral has a limit as $t\to +\infty$, the limit will in general not be $0$.
Notice that these decay rates, at least by themselves, do not seem sufficient for a global nonlinear result.
\end{remark}

\begin{remark}[Phase mixing and the choice of $\Phi$]
The result in Theorem~\ref{thm:main} can be interpreted as a quantitative phase-mixing statement. It is well-known that for 
$$\Phi(x) = \f {x^2}2,$$
the solution to \eqref{eq:transport} does \underline{not} undergo phase mixing (see chapter 3 in \cite{BinTre_2011}). It is therefore important that we added the $\f{\ep x^4}2$ term in the definition of the potential.

On the other hand, there are other choices of $\Phi$ for which analogues of Theorem~\ref{thm:main} hold. We expect that as long as $\Phi$ is even and satisfies the non-degeneracy condition of \cite{pRoS2020}, then a similar decay estimate holds. The particular example we used is only chosen for concreteness.
\end{remark}

\begin{remark}[Method of proof]
It is well-known that the linear transport equation \eqref{eq:transport} can be written in action-angle variables, say $(Q, K)$, in which case \eqref{eq:transport} takes the form
\begin{equation}\label{eq:trivial}
\part_t f - c({K})\part_{{Q}} f=0.
\end{equation}

When $c'(K)$ is bounded away from $0$, phase mixing in the sense that $f$ converges \emph{weakly} to a limit can be obtained after solving \eqref{eq:trivial} with a Fourier series in $Q$; see \cite{pRoS2020}. The point here is that $\varphi$ is a (weighted) integral of $f$ over a region of phase space that is most conveniently defined with respect to the $(x,v)$ (as opposed to the action-angle) variables.

We quantify the strong convergence of $\varphi_t\to 0$ by finding an appropriate commuting vector field $Y$ that is adapted to the action-angle variables. The fact that $\varphi$ is naturally defined as an integral over $v$ in $(x,v)$ coordinates makes it tricky to prove decay using this vector field. Furthermore, we are only able to prove $1/\jap{t}^2$ decay; this is for instance in contrast to the decay of the density for the free transport equation on a torus.
\end{remark}

\medskip

\subsection{Related result}

\subsection*{Linear phase mixing results}
In the particular context of Theorem~\ref{thm:main}, decay of $\rd_t\varphi$, but without a quantitative rate, can be inferred from the work \cite{pRoS2020}.

There are many linear phase mixing result, the simplest setting for this is the linear free transport equation. This is well-known; see for instance notes \cite{villani2010landau} by Villani.

One of the most influential work on phase mixing is the groundbreaking paper \cite{Landau1946} of Landau wherein he proposes a linear mechanism for damping for plasmas that does not involve dispersion or change in entropy. In the case of $\mathbb T^d$, this is even understood in a nonlinear setting; see the section on nonlinear results below. The situation is more subtle in $\mathbb R^d$, see \cite{jBnMcM2020}, \cite{rGjS1994}, \cite{rGjS1995} and \cite{dHKttNfR2020}.

See also \cite{jBfW2020}, \cite{faou2021linear} and \cite{iT2017} for linear results on related models. In particular, we note that \cite{faou2021linear} also rely on action-angle variables in their analysis.

\subsection*{Relation with other phase-mixing problems with integrable underlying dynamics}
As pointed out in \cite{pRoS2020}, phase space mixing is relevant for the dynamics of kinetic models in many physical phenomena from stellar systems and dark matter halos to mixing of relativistic gas surrounding a black hole. See \cite{dominguez2017description} for related discussions on dark matter halos. We also refer the interested reader to \cite{BinTre_2011} for further background and discussions of phase mixing in other models, including the stability of galaxies.

We hope that the present work would also be a model problem and aid in understanding more complicated systems such as those described in \cite{pRoS2020}. One particularly interesting problem is the stability of the Schwarzschild solution to the Einstein--Vlasov system in spherical symmetry. 

\subsection*{Nonlinear phase mixing results}
Nonlinear Landau damping for Vlasov--Poisson on $\T^d$ was first proven in analytic regularity by Mouhot--Villani in their landmark paper  \cite{cMcV2011}. Since then their work has been extended and simplified in \cite{jBnMcM2016} and \cite{eGtNiR2020a}. %We note that all these works require Gevrey regularity to close the nonlinear problem. 

See also other nonlinear results, e.g.~in \cite{jB2017}, \cite{jBnMcM2018}, \cite{chaturvedi2021vlasov}, \cite{faou2016landau}, \cite{dHKttNfR2019}, \cite{bY2016}.

\subsection*{Collisional problems with confining potentials}

Confining potentials for kinetic equations have been well-studied, particularly for collisional models. Linear stability results can be found in \cite{carrapatoso2021special}, \cite{dolbeault2009hypocoercivity}, \cite{dolbeault2015hypocoercivity}, \cite{duan2011hypocoercivity} and \cite{duan2012hypocoercivity}.

In this connection, it would also be of interest to understand how phase mixing effects (studied in the present paper) interact with collisional effects (cf.~\cite{jB2017}, \cite{chaturvedi2021vlasov}, \cite{iT2017}.)

\section{The Vlasov--Poisson system}\label{sec:VP}

The motivation of our result is the Vlasov--Poisson system:
\begin{equation}\label{eq:VP}
\begin{cases}
\part_t f+v\part_x f-(\part_x \Phi+\part_x \vp)\part_v f=0, \\
-\part^2_x \varphi=\int_{\R} f\d v.
\end{cases}
\end{equation}

Note that \eqref{eq:VP} can be rewritten as 
\begin{equation}\label{e.VP}
\part_t f+\{H,f\}=0,
\end{equation}
where $H$ is the Hamiltonian given by 
\begin{equation}\label{eq:nonlin-hamiltonian}
H(x,v)=\frac{v^2}{2}+\Phi(x)+\vp(t,x).
\end{equation}

Notice that $f \equiv 0$ is a solution to \eqref{eq:VP}, and the transport equation \eqref{eq:transport} is 
the linearization of \eqref{eq:VP} near the zero solution.

One cannot hope that the term $\rd_x \varphi$ in the nonlinear term decays as $t\to +\infty$. 
(This can be seen by noting that $\int_{\R} f\d v \geq 0$ pointwise.) At best one can hope 
that $\rd_x \varphi$ converges to some (non-trivial) limiting profile as $t\to +\infty$. For $f$ satisfying 
the linear equation \eqref{eq:transport}, such convergence (without a quantitative rate) has been shown
in \cite{pRoS2020}.

In anticipation of the nonlinear problem, it is important to understand the quantitative convergence. Since
$\rd_x\varphi$ does not converge to $0$, it is natural to understand the decay rate of $\rd_t \rd_x \varphi$.

As a first step to understand \eqref{eq:VP}, we look at the linearized problem \eqref{eq:transport} around the zero solution and prove that we get integrable decay for $\varphi_t$ in the linearized dynamics.

\begin{remark}
Note that the Poisson's equation above reads
$$-\part^2_x \vp=\rho.$$
In particular, $\vp$ is only defined up to a harmonic function, i.e.~a linear function a $x$. 
In Theorem~\ref{thm:main}, we remove this ambiguity by setting $\vp(0) = (\rd_x \vp)(0) = 0$. 
Notice that other normalization, e.g., $\vp(-\infty) = (\rd_x \vp)(-\infty)=0$ would not change the function $\vp_t = \rd_t \vp$.
\end{remark}

\section{The action-angle variables}\label{sec:action}

\subsection{First change of variables}
From now on we will consider the Hamiltonian $$H=\frac{v^2}{2}+\Phi(x).$$ This is the Hamiltonian for the equations \eqref{eq:transport}, which is also \eqref{eq:nonlin-hamiltonian} without the $\varphi$ (the self-interaction term). As an intermediate step to getting the action-angle variables we use the change of coordinates 
\begin{align*}
(t,x,v)\mapsto (t,\chi,H) &\hspace{5 em}\text{when $x>0$}\\
(t,x,v)\mapsto (t,\pi-\chi,H) &\hspace{5 em}\text{when $x\leq 0$},
\end{align*}
where $\chi:=\arcsin\left(\frac{v}{\sqrt{2H}}\right).$\\
First we check if the change of variables is well defined by calculating the Jacobian for $x>0$,
\begin{equation*}
J=\begin{pmatrix}
\part_t t& \part_x t&\part_v t\\
\part_t H& \part_x H&\part_v H\\
\part_t \chi& \part_x \chi&\part_v \chi
\end{pmatrix}
=
\begin{pmatrix}
1&0&0\\
0&\Phi_x&v\\
0&-\frac{v}{2H}\cdot\frac{\Phi_x}{\sqrt{\Phi}}&\frac{\sqrt{\Phi}}{H}
\end{pmatrix}
\end{equation*}
Now 
\begin{align*}
\det(J)&=\frac{\Phi_x}{\sqrt{\Phi}}\frac{(\Phi+v^2/2)}{H}\\
&=\frac{\Phi_x}{\sqrt{\Phi}}.
\end{align*}
Similarly for $x\leq 0$, \begin{align*}
\det(J)&= - \frac{\Phi_x}{\sqrt{\Phi}}.
\end{align*}
Hence, \begin{align*}
\det(J)
&=\sign{x}\frac{\Phi_x}{\sqrt{\Phi}}.
\end{align*}
Next by chain rule and using that $H$ is independent of $t$, we get

\begin{align*}
\part_x&=\sign{x}\part_x \chi\part_{\chi}+\part_x H\part_H\\
&=-\frac{v}{2H}\cdot\frac{\Phi_x}{\sqrt{\Phi}}\part_{\chi}+\Phi_x\part_H,
\end{align*}
\begin{align*}
\part_v&=\sign{x}\part_v \chi\part_{\chi}+\part_v H\part_H\\
&=\frac{\sqrt{\Phi}}{H}\part_{\chi}+v\part_H.
\end{align*}
Pluggin this in \eref{VP}, we get the equation
\begin{equation}\label{e.VP_chi_H_lin}
\part_t f-\sign{x}\frac{\Phi_x}{\sqrt{\Phi}}\part_\chi f=0.
\end{equation}

\subsection{Second change of variables} The coefficient in front of $\part_\chi f$ in \eqref{e.VP_chi_H_lin} depends on both $\chi$ and $H$. To take care of this, we reparametrize $\chi$ (in a manner depending on $H$). More precisely, for a fixed $H$, we define $Q(\chi,H)$ such that 
$$\frac{\d Q}{\d \chi}=\frac{c(H)}{a(\chi,H)},\quad Q(0,H)=0,$$
where $a(\chi,H)=\sign{x}\frac{\Phi_x(x)}{\sqrt{\Phi(x)}}$ such that $x=x(\chi,H).$ To fix $c(H)$, we require that for every $H$, 
\begin{equation}\label{eq:C.def}
2\pi=\int_0^{2\pi}\d Q=c(H)\int_0^{2\pi}\frac{1}{a(\chi,H)}\d \chi.
\end{equation}
Now we define the change of variables, $(\chi,H)\mapsto (Q,K)$ where $K=H$. Then note,
$$a(\chi,H)\partial_\chi=c(H)\part_Q$$
and $$\part_{H}=\part_{K}+\frac{\part Q}{\part H}\part_Q.$$
Thus in these coordinates, we can rewrite \eref{VP_chi_H_lin} as 
\begin{equation}\label{e.VP_Q_H_lin}
\part_t f-c({K})\part_Q f=0.
\end{equation}

Further, the Jacobian is 
\begin{equation*}
\begin{pmatrix}
\part_H K&\part_\chi K\\
\part_H Q& \part_\chi Q
\end{pmatrix}
=
\begin{pmatrix}
1&0\\
\part_H Q&\frac{c(H)}{a(\chi,H)}
\end{pmatrix}.
\end{equation*}
Note that the determinant is $\frac{c(H)}{a(\chi,H)}$. Further, since 
\begin{align*}
a(\chi,H)&=\sign{x}\frac{\Phi_x(x)}{\sqrt{\Phi(x)}}\\
&=\sqrt{2}\frac{1+2\varepsilon x^2}{\sqrt{1+\varepsilon x^2}},
\end{align*}
	we have that $a(\chi,H) \approx 1$ when $x$ is in a compact subset of $\mathbb R$. As a result the determinant is bounded away from zero. For more details see Lemma~\ref{lem:c-prime-bound}.

%\textbf{Note that $Y_{(Q,K)}=tc'(H)\part_Q+\part_{K}$ commutes with \eref{VP_chi_H_lin}.}\\
%\textbf{In $(\chi,H)$ variables, $Y_{(\chi, H)}=\left(t\frac{c'(H)}{c(H)}a(\chi,H)+\frac{\part \chi}{\part K}\right)\part_\chi+\part_H$.}

\section{The commuting vector field}
We first define the vector field $$Y=tc'(H)\partial_Q-\part_K.$$ In this section we prove that this vector field commutes with the transport operator as in \eqref{e.VP_Q_H_lin} and that $|c'(H)|>0.$
\subsection{Commutation property}

The following commutation formula is an easy computation and thus we leave out the details.
\begin{lemma}\label{lem:commutation}
Let $Y = t c'(H) \rd_Q - \rd_{K}$. Then
$$[\rd_t - c(H) \rd_Q, Y] = 0.$$
\end{lemma}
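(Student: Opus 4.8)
The plan is to verify the commutator identity $[\rd_t - c(H)\rd_Q, Y] = 0$ by direct computation, treating the vector fields as first-order differential operators in the $(t, Q, K)$ coordinate system. Recall that $K = H$, so $c(H) = c(K)$ is a function of $K$ alone, and the transport operator $L := \rd_t - c(K)\rd_Q$ has coefficients depending only on $K$, while $Y = tc'(K)\rd_Q - \rd_K$ has coefficients depending on $t$ and $K$. Since both operators are expressed in the same three coordinates and the coefficient functions depend only on the ``good'' variables (never on $Q$ itself), the computation should collapse to a small number of surviving terms.

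First I would expand the commutator acting on an arbitrary smooth test function $f(t, Q, K)$, writing $[L, Y]f = L(Yf) - Y(Lf)$, and track the terms by the type of second-order derivative they produce. The key structural observation is that $\rd_t$, $\rd_Q$, and $\rd_K$ all commute with one another as coordinate partials, so every genuinely second-order contribution must cancel in the commutator of two first-order operators; the only surviving terms come from the operators differentiating each other's \emph{coefficients}. Concretely, $L$ carries the $K$-dependent coefficient $-c(K)$, which is hit by the $-\rd_K$ piece of $Y$, and $Y$ carries the $t$-dependent coefficient $tc'(K)$ in its $\rd_Q$ piece, which is hit by the $\rd_t$ piece of $L$.

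The crux of the calculation is then to check that these two surviving first-order terms cancel. The term where $\rd_t$ in $L$ acts on the coefficient $tc'(K)$ of $Y$ produces $+c'(K)\rd_Q f$. The term where $-\rd_K$ in $Y$ acts on the coefficient $-c(K)$ of $L$ produces $-(-c'(K))\rd_Q f$ with the sign bookkeeping from the commutator; arranging these carefully, the contribution $+c'(K)\rd_Q f$ from $L(Yf)$ is exactly matched by a $+c'(K)\rd_Q f$ coming from the $-Y(Lf)$ term involving $-\rd_K(-c(K)\rd_Q f) = c'(K)\rd_Q f + c(K)\rd_K\rd_Q f$, after the mixed second-order pieces $c(K)\rd_Q\rd_K$ and $tc'(K)c'(K)\rd_Q\rd_Q$ have cancelled against their counterparts. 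The single nontrivial cancellation is between the $c'(K)\rd_Q f$ generated by $\rd_t$ hitting $t$ and the $c'(K)\rd_Q f$ generated by $\rd_K$ hitting $c(K)$, which is precisely why the factor of $t$ in the definition of $Y$ is forced.

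I do not expect any serious obstacle here — this is a routine bracket computation, which is why the excerpt says it is ``an easy computation'' whose details are omitted. The only place to exercise care is the sign and coefficient bookkeeping in matching the two $c'(K)\rd_Q f$ terms, and in confirming that all second-order terms (of types $\rd_t\rd_Q$, $\rd_t\rd_K$, $\rd_Q\rd_K$, $\rd_Q^2$) genuinely cancel because the only coordinate appearing in any coefficient is $K$ (and $t$ linearly), never $Q$. Since $c$ depends on $K$ alone and the $Q$-dependence of the coefficients is absent, no term of the form $\rd_Q(\text{coefficient})$ ever arises, which is what makes the identity clean.
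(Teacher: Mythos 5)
Your computation is correct: working in $(t,Q,K)$ coordinates, the mixed second-order terms cancel by equality of coordinate partials, and the two first-order terms $c'(K)\rd_Q f$ (from $\rd_t$ hitting the factor $t$ in $Y$, and from $-\rd_K$ hitting the coefficient $-c(K)$ in the transport operator) appear with the same sign in $L(Yf)$ and $Y(Lf)$ and thus cancel in the difference, which is exactly why the factor $t$ is needed. This is precisely the routine bracket computation the paper declares ``an easy computation'' and omits, so your proposal fills in the intended proof with no deviation.
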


The following is an easy consequence of Lemma~\ref{lem:commutation}:
\begin{lemma}\label{lem:infinity-est}
Let $f$ be a solution to \eqref{e.VP_Q_H_lin} with initial data satisfying assumptions of Theorem~\ref{thm:main}. Then
$$\sup_{(t,Q,K) \in [0,\infty)\times \mathbb T^1 \times [c_s, c_s^{-1}]}\hspace{.1em}\sum_{\ell\leq 2} |Y^{\ell} f|(t,Q,K) \ls \sup_{(x,v) \in \mathbb R\times \mathbb R} \hspace{.1em}\sum_{|\alp|+ |\bt| \leq 2} |\rd_x^\alp \rd_v^\bt f_0|(x,v).$$
\end{lemma}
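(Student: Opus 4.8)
The plan is to use the commutation structure to reduce the whole bound to an estimate at $t=0$, where $Y$ degenerates into a pure $K$-derivative, and then to pass from these $K$-derivatives back to the $(x,v)$ derivatives on the right-hand side via the chain rule.

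First I would record that, writing $\L := \part_t - c(K)\part_Q$ for the transport operator in \eqref{e.VP_Q_H_lin}, Lemma~\ref{lem:commutation} gives $\L(Yg) = Y(\L g) + [\L,Y]g = Y(\L g)$ for any $g$. Hence, since $\L f = 0$, we get $\L(Y^\ell f) = 0$ for every $\ell \leq 2$; that is, each iterate $Y^\ell f$ is itself a solution of the same transport equation. This equation is solved explicitly along characteristics by $g(t,Q,K) = g(0, Q + c(K)t, K)$, so $K$ — and hence the support in $K$ — is preserved, and the spatial supremum of any solution is conserved in time:
$$\sup_{(t,Q,K)\in[0,\infty)\times\T^1\times[c_s,c_s^{-1}]} |Y^\ell f|(t,Q,K) = \sup_{(Q,K)\in\T^1\times[c_s,c_s^{-1}]} |Y^\ell f|(0,Q,K).$$
Here the restriction to $K\in[c_s,c_s^{-1}]$ is exactly the image of the support hypothesis on $f_0$ under $K=H=\f{v^2}2+\Phi$. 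This reduces the claim to a bound at $t=0$.

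Next I would evaluate $Y^\ell f$ at $t=0$. Because the $\part_Q$-component of $Y=tc'(K)\part_Q - \part_K$ carries an explicit factor of $t$, every term produced by that component vanishes at $t=0$, and a short induction gives $Y^\ell f|_{t=0} = (-\part_K)^\ell f_0$ for $\ell\leq 2$. It therefore remains to establish
$$\sup_{(Q,K)} |\part_K^\ell f_0| \ls \sup_{(x,v)} \sum_{|\alp|+|\bt|\leq 2}|\part_x^\alp \part_v^\bt f_0|, \qquad \ell \leq 2.$$
This I would obtain from the chain rule: on the region $K\in[c_s,c_s^{-1}]$ the composite change of variables $(Q,K)\mapsto(x,v)$ of Section~\ref{sec:action} is a smooth diffeomorphism with bounded first and second derivatives, so $\part_K$ is a combination of $\part_x,\part_v$ with bounded coefficients, and $\part_K^2$ is a combination of $\part_x^2,\part_x\part_v,\part_v^2$ (plus lower-order terms) with bounded coefficients. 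These bounds are absorbed into the implicit constant, which is allowed to depend on $\ep$ and $c_s$.

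The main obstacle is precisely this last step: one must establish quantitative regularity and non-degeneracy of the action-angle map on $\{c_s\leq H\leq c_s^{-1}\}$, namely uniform upper and lower bounds on the relevant Jacobians together with bounds on $c'(K)$, $c''(K)$, $\part Q/\part H$ and their derivatives. This is where the explicit structure of $\Phi$ enters, and where it matters that the support avoids the degenerate fixed point $H=0$ at which the coordinates become singular. The needed lower bound on $c'$ is recorded in Lemma~\ref{lem:c-prime-bound}, and the boundedness of the Jacobian away from $0$ and $\infty$ follows from the computation $a(\chi,H) = \sqrt 2\,\f{1+2\ep x^2}{\sqrt{1+\ep x^2}}$ carried out in Section~\ref{sec:action}.
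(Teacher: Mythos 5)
Your proposal is correct and follows essentially the same route as the paper's own proof: commuting $Y$ through the transport operator so that each $Y^\ell f$ solves \eqref{e.VP_Q_H_lin}, using preservation of the supremum along characteristics to reduce to $t=0$ where $Y^\ell f|_{t=0}=(-\part_K)^\ell f_0$, and then converting $\part_K$-derivatives to $(x,v)$-derivatives via the boundedness and non-degeneracy of the change of variables. You merely spell out (with the explicit characteristic formula and the quantitative Jacobian bounds) steps that the paper treats in one line.
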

\begin{proof}
By Lemma~\ref{lem:commutation}, we have that $Y^\ell f$ satisfies the transport equation \eqref{e.VP_Q_H_lin} for any $\ell \in \mathbb N \cap \{0\}$. Hence we get the estimate
$$\sup_{(t,Q,K) \in [0,\infty)\times \mathbb T^1 \times [c_s, c_s^{-1}]}\hspace{.1em}\sum_{\ell\leq 2} |Y^{\ell} f|(t,Q,K)\ls \sup_{(Q,K) \in \mathbb T^1 \times [c_s, c_s^{-1}]}\hspace{.1em}\sum_{\ell \leq 2} |\part_K^{\ell} f_0|(Q,K).$$
Since the change of variables $(Q,K)\to (x,v)$ is well-defined and bounded away from zero, we get the required result.
\end{proof}
\subsection{Positivity of $|c'(K)|$}
We prove that $|c'(K)|$ is uniformly bounded below on the support of $f$. This plays a key role in the next section ensuring phase mixing.
\begin{lemma}\label{lem:c-prime-bound}
For every $c_s <+\infty$, there exists $\ep_0>0$ such that whenever $\ep \in (0,\ep_0]$, there is a small constant $\de >0$ (depending on $c_s$ and $\ep$) such that 
$$\inf_{K \in [c_s, c_s^{-1}]} |c'(K)| = \inf_{H \in [c_s, c_s^{-1}]} |c'(H)| \geq \de.$$
\end{lemma}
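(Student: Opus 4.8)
The plan is to reduce the lemma to the strict monotonicity of the period integral. From \eqref{eq:C.def}, setting $T(H) := \int_0^{2\pi}\f{\d\chi}{a(\chi,H)}$ one has $c(H) = \f{2\pi}{T(H)}$, hence $c'(H) = -\f{2\pi\,T'(H)}{T(H)^2}$. Since on the support of $f$ the variable $x$ ranges over a fixed compact set and there $\sqrt2\le a(\chi,H)\le C$, the quantity $T(H)$ is bounded above and below by positive constants uniformly for $H\in[c_s,c_s^{-1}]$. It therefore suffices to show that $T'(H)$ is negative and bounded away from $0$ on this interval.

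First I would record the dependence of $a$ on $H$. For fixed $\chi$ the point $x=x(\chi,H)$ is determined by $\Phi(x)=H\cos^2\chi$; writing $w:=x^2$, the map $w\mapsto\Phi=\f w2(1+\ep w)$ is a smooth strictly increasing bijection of $[0,\i)$, so $w=w(\chi,H)=\Phi^{-1}(H\cos^2\chi)$ is smooth and, for every fixed $\chi$ with $\cos\chi\neq0$, strictly increasing in $H$. Moreover $a$ depends on $(\chi,H)$ only through $w$, via $a=\sqrt2\,(1+2\ep w)(1+\ep w)^{-1/2}$, and
$$\part_w\log a=\f{2\ep}{1+2\ep w}-\f{\ep}{2(1+\ep w)}=\f{\ep(3+2\ep w)}{2(1+2\ep w)(1+\ep w)}>0\qquad(w\ge0,\ \ep>0).$$
Thus $a$ is strictly increasing in $w$, so $a^{-1}$ is strictly decreasing in $w$, and since $w$ is increasing in $H$ the integrand $a(\chi,H)^{-1}$ is strictly decreasing in $H$ for each $\chi\notin\{\tfrac\pi2,\tfrac{3\pi}2\}$. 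Integrating, $T'(H)<0$ strictly, whence $c'(H)>0$ for all $\ep>0$ and all $H>0$.

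To obtain the quantitative lower bound $\de$ I would argue in either of two ways. Since $c$ is $C^1$ (see below) and $c'>0$ on the compact interval $[c_s,c_s^{-1}]$, it attains a strictly positive minimum $\de>0$ there. Alternatively, expanding in $\ep$ — using $w=2H\cos^2\chi+O(\ep)$ and $a^{-1}=\tfrac1{\sqrt2}\big(1-\tfrac32\ep w+O(\ep^2)\big)$ — gives $T(H)=\tfrac{2\pi}{\sqrt2}\big(1-\tfrac32\ep H\big)+O(\ep^2)$ and hence $c'(H)=\gamma\,\ep+O(\ep^2)$ with $\gamma=\tfrac{3}{\sqrt2}>0$, uniformly for $H\in[c_s,c_s^{-1}]$; this makes the $\ep_0$ and $\de\sim\ep$ in the statement explicit. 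The strict positivity of $c'$ in fact holds for every $\ep>0$; the smallness of $\ep$ only enters through the explicit form of $\de$.

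The one point requiring care — and the main technical obstacle — is the regularity and uniform control of the implicit change of variables $x(\chi,H)$, especially near $\chi=\tfrac\pi2$ where $x\to0$ and the relation $\Phi(x)=H\cos^2\chi$ degenerates. Differentiating this relation gives $\part_H x=\cos^2\chi/\Phi'(x)$, which near $\chi=\tfrac\pi2$ behaves like $x/(2H)\to0$; hence both $a^{-1}$ and $\part_H(a^{-1})=-a^{-2}(\part_w a)\,\part_H w$ extend continuously and boundedly to $[0,2\pi]\times[c_s,c_s^{-1}]$. This legitimizes differentiation under the integral sign, so that $T'(H)=\int_0^{2\pi}\part_H\big(a(\chi,H)^{-1}\big)\,\d\chi$ and $c\in C^1$, closing the argument. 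Once these uniform bounds are in place, the remaining computations are elementary.
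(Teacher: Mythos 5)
Your proof is correct, and its core is the same as the paper's: both differentiate the period integral $T(H)=2\pi/c(H)=\int_0^{2\pi}a(\chi,H)^{-1}\,\d\chi$ under the integral sign and show the resulting integrand has one definite sign, of size $\ep$. The differences are in execution, and both work in your favor. First, by routing the chain rule through $w=x^2$ — so that $a=\sqrt2\,(1+2\ep w)(1+\ep w)^{-1/2}$ is an explicit increasing function of $w$ and $w=\Phi^{-1}(H\cos^2\chi)$ is smooth with $\part_H w=2\cos^2\chi/(1+2\ep w)$ bounded — you make the computation manifestly non-degenerate at $\chi=\pm\pi/2$; the paper instead passes through $\part_H x=\cos^2\chi/(x+2\ep x^3)$, which has an apparent $0/0$ there (removable, since $\cos^2\chi\approx x^2/(2H)$, and invisible in the paper's final integrand, but never commented on), so your treatment of differentiation under the integral is the more careful one. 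Second, where the paper bounds the fully explicit integrand two-sidedly by $\approx\ep$ using $\ep$ small, you extract the lower bound either softly (strict positivity of $c'$ plus continuity on the compact interval $[c_s,c_s^{-1}]$) or through the expansion $c'(H)=\f{3}{\sqrt2}\ep+O(\ep^2)$; the soft route shows that smallness of $\ep$ is not actually needed for positivity of $c'$ — a slightly stronger conclusion than the lemma claims — while the paper's explicit bound (and your expansion) has the advantage of exhibiting $\de\approx\ep$ directly. Your verified identity $\part_w\log a=\ep(3+2\ep w)/\bigl(2(1+2\ep w)(1+\ep w)\bigr)>0$ is exactly the same algebra as the bracket the paper computes in \eqref{eq:c'/c}, so the two proofs agree computationally; yours is simply organized as monotonicity-plus-compactness rather than as an explicit two-sided estimate.
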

\begin{proof}
By definition of $c(H)$, we have
$$\frac{2\pi}{c(H)}=\int_0^{2\pi}\left|\frac{\sqrt{\Phi}}{\Phi'}\right|\d \chi,$$
so that using $\Phi=\frac{x^2}{2}+\frac{\ep}{2} x^4$, we obtain
$$\frac{2\pi}{c(H)}=\int_0^{2\pi}\frac{\sqrt{1+\ep x^2}}{\sqrt{2}(1+2\ep x^2)}\d \chi.$$

Notice that for $H \in [c_s, c_s^{-1}]$, $|x|$ is bounded. It follows that $c(H) \approx 1$. Therefore, to prove strict positivity of $|c'(H)|$, it suffices to prove positivity of $\left|\frac{c'(H)}{c^2(H)}\right|.$ Note that 
\begin{equation}\label{eq:c'/c}
\begin{split}
\frac{-2\pi c'(H)}{c^2(H)}&=\f 1{\sqrt 2}\int_0^{2\pi}\part_H\left(\frac{\sqrt{1+\ep x^2}}{1+2\ep x^2}\right)\d \chi\\
&=\f 1{\sqrt 2}\int_0^{2\pi}\part_H x\left[\frac{\ep x}{\sqrt{1+\ep x^2}(1+2\ep x^2)}-\frac{4\ep x\sqrt{1+\ep x^2}}{(1+2\ep x^2)^2}\right]\d \chi\\
&=\f 1{\sqrt 2}\int_0^{2\pi}\part_H x\left[\frac{-3\ep x-2\ep^2x^3}{\sqrt{1+\ep x^2}(1+2\ep x^2)^2}\right]\d \chi.
\end{split}
\end{equation}

Now we calculate $\part_H x.$ First we use the equation, $H=\frac{v^2}{2}+\Phi(x).$ Precisely, we have 
$$1=v\part_H v+\Phi'(x) \part_H x.$$ Thus 
\begin{equation}\label{eq:dxH}
\part_H x=\frac{1-v\part_H v}{\Phi'}.
\end{equation}
Next we use that $\frac{v}{\sqrt{2H}}=\sin\chi,$
$$0=\frac{\part_H v}{\sqrt{2H}}-\frac{v}{(2H)^{\frac{3}{2}}}.$$
Thus $\part_H v=\frac{v}{2H}.$ Plugging this into \eqref{eq:dxH}, we get that 
\begin{equation}\label{eq:dxH.final}
\part_H x=\frac{1-\frac{v^2}{2H}}{\Phi'}=\frac{\cos^2 \chi}{\Phi'(x)} = \frac{\cos^2\chi}{x+2\ep x^3},
\end{equation}
where in the last equality we used $\Phi=\frac{x^2+\ep x^4}{2}$.

Plugging \eqref{eq:dxH.final} back into \eqref{eq:c'/c}, we get that
\begin{align*}
\frac{-2\pi c'(H)}{c^2(H)}&=\int_0^{2\pi}\frac{\cos^2\chi}{\sqrt{2} x(1+2\ep x^2)}\left[\frac{-x(3\ep +2\ep^2x^2)}{\sqrt{1+\ep x^2}(1+2\ep x^2)^2}\right]\d \chi\\
&=-\int_0^{2\pi}{\cos^2\chi}\left[\frac{(3\ep +2\ep^2x^2)}{\sqrt{2(1+\ep x^2)}(1+2\ep x^2)^3}\right]\d \chi.
\end{align*}
Finally note that since $|x|$ is bounded on the region of interest, after choosing $\ep_0$ sufficiently small, we have
$$\frac{(3\ep +2\ep^2x^2)}{\sqrt{2(1+\ep x^2)}(1+2\ep x^2)^3}\approx \ep,$$ 
and thus $|c'(H)|>\de$. \qedhere
\end{proof}

\section{Decay for $\vp_t$}
In this section we finally prove the decay for $\varphi_t$ (recall Theorem~\ref{thm:main}). 

To keep the notation lean, we will often suppress the explicit dependence on $t$.
\begin{lemma}\label{lem:phi_t}
For $f$ satisfying the assumptions of Theorem~\ref{thm:main}, and $\varphi$ defined as in \eqref{eq:phi}, we have the following formula
$$\vp_t(x')=\int_0^{x'}\int_{\R}v[f(y,v)-f(0,v)]\d v\d y.$$
\end{lemma}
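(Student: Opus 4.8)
The plan is to derive the formula for $\vp_t$ directly from the definition \eqref{eq:phi} by differentiating in $t$ and then integrating the Poisson equation twice in $x$. Starting from $\rd_{xx}^2 \vp = \int_{\R} f \, \ud v$, I differentiate in time to get $\rd_{xx}^2 \vp_t = \int_{\R} \rd_t f \, \ud v$. The key is to rewrite $\int_{\R} \rd_t f \, \ud v$ as a spatial derivative using the transport equation \eqref{eq:transport}: since $\rd_t f = -v\rd_x f + \rd_x \Phi \, \rd_v f$, integrating over $v$ the last term vanishes (as $\rd_v f$ integrates to zero for compactly supported $f$), leaving $\int_{\R} \rd_t f \, \ud v = -\rd_x \int_{\R} v f \, \ud v$. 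Hence $\rd_{xx}^2 \vp_t = -\rd_x \int_{\R} v f \, \ud v$.

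Next I integrate this identity once in $x$. Writing $g(x) := \int_{\R} v f(x,v)\, \ud v$, we obtain $\rd_x \vp_t(x) = -g(x) + \rd_x\vp_t(0)$ after integrating from $0$ to $x$ and using $\rd_x\rd_{x}\vp_t = -\rd_x g$. The normalization conditions $\vp(t,0) = \rd_x\vp(t,0) = 0$ from \eqref{eq:phi} differentiate in $t$ to give $\vp_t(t,0) = \rd_x\vp_t(t,0) = 0$, which pins down the constants of integration. So $\rd_x \vp_t(x) = g(0) - g(x) = \int_{\R} v[f(0,v) - f(x,v)]\, \ud v$.

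Finally, I integrate once more in $x$ from $0$ to $x'$, using $\vp_t(0) = 0$, to arrive at
$$\vp_t(x') = \int_0^{x'} \int_{\R} v[f(0,v) - f(y,v)]\, \ud v\, \ud y.$$
This is the negative of the claimed formula, so I should be careful with the sign of the Poisson equation: the definition in \eqref{eq:phi} uses $\rd_{xx}^2\vp = \int f\, \ud v$ (with a $+$ sign), whereas the Vlasov–Poisson convention \eqref{eq:VP} uses $-\rd_{xx}^2\vp = \int f\, \ud v$. Tracking which sign is in force throughout is precisely the step where errors creep in, so I would state explicitly that I am working with the convention in \eqref{eq:phi} and verify that the two integrations reproduce the stated sign.

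The computation is essentially routine; the only genuine care needed is in the two steps that invoke boundary data. First, the claim that $\int_{\R} \rd_x\Phi \, \rd_v f \, \ud v = \rd_x\Phi \int_{\R}\rd_v f\, \ud v = 0$ relies on $f$ having compact support in $v$ (guaranteed by the support hypothesis $c_s \le \tfrac{v^2}{2} + \Phi(x) \le c_s^{-1}$ in Theorem~\ref{thm:main}), so the boundary terms at $v = \pm\infty$ vanish. Second, the vanishing of the constants of integration depends entirely on the normalization $\vp(t,0) = \rd_x\vp(t,0) = 0$; I would emphasize that it is this choice, rather than any decay at infinity, that produces the clean formula. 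I expect no substantive obstacle — the main pitfall is simply bookkeeping of signs and correctly transferring the normalization conditions from $\vp$ to $\vp_t$ by differentiating in time.
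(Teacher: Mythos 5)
Your computation is correct and follows essentially the same route as the paper: differentiate the Poisson equation in time, use the transport equation (with the compact support in $v$) to write $\int_{\R} \rd_t f\,\ud v = -\rd_x \int_{\R} v f\,\ud v$, and integrate twice in $x$ using the normalization $\vp_t(t,0)=\rd_x\vp_t(t,0)=0$. The sign discrepancy you flagged is genuine, and it is an inconsistency in the paper itself rather than in your work: the formula in Lemma~\ref{lem:phi_t} is the one produced by the Vlasov--Poisson convention $-\rd_x^2\vp=\int_{\R} f\,\ud v$ of \eqref{eq:VP}, not by \eqref{eq:phi} as literally written; under \eqref{eq:phi} one gets exactly the negative, as you found. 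Indeed, the paper's own proof works with $-\rd_x^2\vp_t=\rho_t$ (the \eqref{eq:VP} convention) and also states the continuity equation as $\rho_t=+\int_{\R} v\rd_x f\,\ud v$ (the correct sign is $-$), two slips whose cancellation yields the stated formula; so do not expect the two integrations to "reproduce the stated sign" under \eqref{eq:phi} --- they will not. Since Theorem~\ref{thm:main} only estimates $|\rd_t\vp|$, the overall sign is immaterial for the main result, and your bookkeeping is the correct one for the definition as stated.
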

\begin{proof}
By the continuity equation (following directly from \eqref{eq:transport}), we have that $$\rho_t=\int_{\R}v \part_x f\d v.$$
Thus $$-\part^2_x \vp_t=\rho_t=\int_{\R}v \rd_x f\d v.$$
Solving the Laplace's equation (with boundary conditions \eqref{eq:phi}), we get
$$\vp_t(x')=\int_0^{x'}\int_0^y \int_{\R}v\part_x f(z,v)\d v\d z\d y.$$
Integrating by parts in $z$, we get
$$\vp_t(x')=\int_0^{x'}\int_{\R}v[f(y,v)-f(0,v)]\d v\d y.$$
\end{proof}

In view of Lemma~\ref{lem:phi_t}, it suffices to bound $\int_0^{x'}\int_{\R} v f(0,v) \d v\d y$ and $\int_0^{x'}\int_{\R} v f(y,v) \d v\d y$, which will be achieved in the next two subsections respectively.

\subsection{Decay for the term involving $f(0,v)$}
We first prove decay for $\int_{\R} v f(0,v)\d v.$ {Before proving the main estimate in Proposition~\ref{prop:term-x=0}, we first prove a lemma.}

\begin{lemma}\label{lem:Q.at.pi.2}
The level set $\{x = 0\}$ corresponds to the level sets $\{ Q = \f \pi 2 \} \cap \{ Q = -\f \pi 2\} \cup \{(x,v) = (0,0)\}$. 
\end{lemma}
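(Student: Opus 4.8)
The plan is to peel back the two changes of variables in turn, first locating $\{x=0\}$ in the intermediate angle $\chi$ and then transporting this description to the reparametrized angle $Q$. First I would use that $\chi$ is built from $\arcsin(v/\sqrt{2H})$, so $v=\sqrt{2H}\,\sin\chi$ on each branch; feeding this into $H=\tfrac{v^2}2+\Phi(x)$ gives the identity $\Phi(x)=H\cos^2\chi$. Since $\Phi(x)=\tfrac{x^2}2(1+\ep x^2)$ vanishes only at $x=0$, on $\{H>0\}$ we have $x=0$ exactly when $\cos\chi=0$, i.e.\ $\chi\equiv\pm\tfrac\pi2\pmod{2\pi}$; the leftover case $H=0$ forces $(x,v)=(0,0)$. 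Checking the two branches of the first change of variables at the boundary $x=0$ shows they agree and assign $\chi=\tfrac\pi2$ to $v>0$ and $\chi=-\tfrac\pi2$ to $v<0$.

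Next I would show $Q(\pm\tfrac\pi2,H)=\pm\tfrac\pi2$. The crucial point is that $a$ depends on $\chi$ only through $\cos^2\chi$: the computation recorded in the second change of variables gives $a(\chi,H)=\sqrt2\,\tfrac{1+2\ep x^2}{\sqrt{1+\ep x^2}}$, a function of $x^2$ alone, and $x^2$ is fixed by $\Phi(x)=H\cos^2\chi$. Hence $a(\cdot,H)$ is even in $\chi$ and $\pi$-periodic. From $Q(\chi,H)=c(H)\int_0^\chi a(\chi',H)^{-1}\,\d{\chi'}$ the evenness of $a$ makes $Q$ odd in $\chi$, while $\pi$-periodicity together with the normalization \eqref{eq:C.def} gives $Q(\chi+\pi,H)-Q(\chi,H)=c(H)\int_\chi^{\chi+\pi}a^{-1}\,\d{\chi'}=\pi$. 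Evaluating $Q(-\tfrac\pi2)=-Q(\tfrac\pi2)$ and $Q(\tfrac\pi2)=Q(-\tfrac\pi2)+\pi$ simultaneously forces $Q(\pm\tfrac\pi2,H)=\pm\tfrac\pi2$, uniformly in $H$. Combining the two steps identifies $\{x=0\}$ with $\{Q=\tfrac\pi2\}\cup\{Q=-\tfrac\pi2\}$ together with the origin, which is the claim (the intersection symbol in the statement is to be read as a union).

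The hard part will be establishing the symmetry of $a$ cleanly: although $a$ is manifestly a function of $x^2$ once written in the form above, one should keep track of the piecewise change of variables and verify that the factors $\sign(x)$, the odd $\Phi'$, and the even $\sqrt\Phi$ recombine to make $a$ genuinely even and $\pi$-periodic in $\chi$, rather than merely even on each half-orbit. The degenerate point $(0,0)$, at which $H=0$ and $Q$ is undefined, must be recorded on its own; it causes no trouble later since $f$ is supported in $\{H\geq c_s>0\}$.
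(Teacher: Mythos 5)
Your proposal is correct and follows essentially the same route as the paper: identify $\{x=0\}$ with $\{\chi=\pm\frac\pi2\}$ (plus the origin) via $\Phi(x)=H\cos^2\chi$, then use the symmetries of $a(\cdot,H)$ coming from the evenness of $\Phi$ and the $v$-independence of $a$, together with the normalization \eqref{eq:C.def}, to conclude $Q(\pm\frac\pi2,H)=\pm\frac\pi2$. The only (cosmetic) difference is bookkeeping — you encode the symmetries as oddness of $Q$ plus the translation identity $Q(\chi+\pi)-Q(\chi)=\pi$, whereas the paper splits the normalization integral into four equal pieces by reflection — and you are right that the ``$\cap$'' in the statement is a typo for ``$\cup$''.
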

\begin{proof}
First note that level set $\{x = 0\}$ corresponds to the level sets $\{ \chi = \f \pi 2 \} \cap \{ \chi = -\f \pi 2\} \cup \{(x,v) = (0,0)\}$. This is because when $x = 0$, $\Phi(x)= 0 $, and thus by definition (when $v \neq 0$) $\chi:=\arcsin\left(\frac{v}{\sqrt{2H}}\right) = \arcsin (\pm 1) = \pm \f \pi 2$.

It thus remains to show that
\begin{equation}\label{eq:chi.Q.pi.2}
\chi = \pm \f \pi 2 \iff Q = \pm \f \pi 2.
\end{equation}

Fix $H$, then since $a(\chi,H)=\lvert\frac{\Phi_x}{\sqrt{\Phi}}\rvert$ is independent of $v$, we have
$$c(H)\int_0^{\pi} \frac{1}{a(\chi,H)}\d \chi=c(H)\int_{\pi}^{2\pi} \frac{1}{a(\chi,H)}\d \chi.$$
Further, by the evenness of $\Phi$, we have
$$c(H)\int_0^{\pi/2} \frac{1}{a(\chi,H)}\d \chi=c(H)\int_{\pi/2}^{\pi} \frac{1}{a(\chi,H)}\d \chi.$$
Finally, since we have by construction, $$c(H)\int_0^{2\pi} \frac{1}{a(\chi,H)}\d \chi=2\pi,$$ we have that $$Q(\chi=\pi/2,H)=c(H)\int_0^{\pi/2} \frac{1}{a(\chi,H)}\d \chi=\pi/2.$$
Similarly, $Q(\chi=-\pi/2,H)=-\pi/2.$ Combining these, we obtain \eqref{eq:chi.Q.pi.2}. \qedhere 
\end{proof}

\begin{proposition}\label{prop:term-x=0}
For $f$ satisfying the assumptions of Theorem~\ref{thm:main}, we have the following estimate:
$$\Big| \int_{\R} v f(0,v)\d v \Big| \ls \la t \ra^{-2} \sup_{(x,v) \in \mathbb R\times \mathbb R} \hspace{.1em}\sum_{|\alp|+ |\bt| \leq 2} |\rd_x^\alp \rd_v^\bt f_0|(x,v).$$
\end{proposition}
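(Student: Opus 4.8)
The plan is to reduce the quantity $\int_\R v f(0,v)\,\ud v$ to an integral over the action variable $K$ of a \emph{difference} of $f$ evaluated at the two angles that make up $\{x=0\}$, and then to extract decay by integrating by parts twice in $K$, using that the angular phase is non-stationary (Lemma~\ref{lem:c-prime-bound}) and that the $Y$-derivatives of $f$ stay bounded (Lemma~\ref{lem:infinity-est}).

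First I would change variables from $v$ to $K$ along $\{x=0\}$. By Lemma~\ref{lem:Q.at.pi.2}, the line $\{x=0\}$ is the union of the angle $Q=\tfrac\pi2$ (where $v>0$) and $Q=-\tfrac\pi2$ (where $v<0$), and on $\{x=0\}$ one has $H=\tfrac{v^2}2$, so $v\,\ud v=\ud K$. Splitting the $v$-integral into $v>0$ and $v<0$, the weight $v$ exactly cancels the Jacobian $(2K)^{-1/2}$, giving the clean formula
\[
\int_\R v f(0,v)\,\ud v=\int_{c_s}^{c_s^{-1}}\big[f(\tfrac\pi2,K)-f(-\tfrac\pi2,K)\big]\,\ud K,
\]
where I write $f(Q,K)$ for the solution in action--angle variables and use $\supp f_0\subset\{c_s\le K\le c_s^{-1}\}$.

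Next I would exploit this difference structure. Writing the bracket as $\int_{-\pi/2}^{\pi/2}\part_Q f\,\ud Q$ and substituting the identity $\part_Q f=\f1{t\,c'(K)}(Yf+\part_K f)$ (the definition of $Y$ rearranged) gains one factor of $t^{-1}$; integrating by parts in $K$ in the $\part_K f$ term is legitimate because $f$—and all its $Y$-derivatives, being transported $K$-derivatives of $f_0$—vanish at $K=c_s,c_s^{-1}$, so boundary terms drop. To gain the second factor of $t^{-1}$ I would repeat this, which forces me to track $Y^2f$; the computation terminates after two steps, yielding an expression of the form
\[
\int_\R v f(0,v)\,\ud v=\f1{t^2}\int_{c_s}^{c_s^{-1}}\big[\text{a combination of }Y^2f,\ Yf,\ f\text{ at }Q=\pm\tfrac\pi2\big]\,\ud K,
\]
with coefficients built from $c',c'',c'''$, all bounded since $|c'|\ge\delta$ by Lemma~\ref{lem:c-prime-bound}. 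Bounding the integrand by $\sup(|Y^2f|+|Yf|+|f|)$ and invoking Lemma~\ref{lem:infinity-est} then gives the claimed $\la t\ra^{-2}$ estimate (for $t\gtrsim1$; the range $t\lesssim1$ is immediate from boundedness of $f$).

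The step I expect to be the main obstacle is carrying out the second integration by parts cleanly. The naive splitting of the first-order expression into a ``$Yf$-term'' and an ``$f$-term'' fails: each piece separately carries a non-oscillating ($Q$-averaged) contribution that does \emph{not} decay, and only their sum does. The transparent way to see the required cancellation—and the way I would organize the write-up—is to expand $f=\sum_n a_n(K)e^{in(Q+c(K)t)}$ in a Fourier series in $Q$. The difference $f(\tfrac\pi2,K)-f(-\tfrac\pi2,K)$ annihilates all even modes, in particular the dangerous $n=0$ mode, leaving only $n\neq0$. On each such mode the phase $e^{inc(K)t}$ is genuinely oscillatory (here $c'\neq0$ is essential), so two integrations by parts in $K$ produce a factor $\f1{n^2t^2}$ with an integrand controlled by $\|a_n\|_{C^2_K}\ls\sup_{j\le2}|\part_K^j f_0|$, uniformly in $n$; the resulting sum $\sum_n n^{-2}$ converges. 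Note that the same two $K$-derivatives do double duty—furnishing both the $t^{-2}$ decay and the $n^{-2}$ summability—which is precisely why the vector field $Y$ (bookkeeping $\part_K$) is the efficient tool, and why only $\la t\ra^{-2}$ rather than faster decay is obtained at this level of regularity.
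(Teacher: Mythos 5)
Your proof is correct, but the mechanism you use to extract the $t^{-2}$ decay is genuinely different from the paper's. The two arguments share the same opening reduction (pass to $(Q,K)$ variables on $\{x=0\}$ via Lemma~\ref{lem:Q.at.pi.2}, so the quantity becomes $\int[\bar f(\tfrac\pi2,K)-\bar f(-\tfrac\pi2,K)]\,\d{K}$) and the same external inputs (Lemma~\ref{lem:c-prime-bound} for $|c'|\geq\delta$, Lemma~\ref{lem:infinity-est} for uniform bounds, and boundedness of $f$ to handle $t\ls 1$). After that, the paper neither iterates first-order substitutions nor expands in Fourier: it writes the difference as $\int_{-\pi/2}^{\pi/2}\int \part_Q\bar f\,\d{K}\,\d{Q}$, applies Cauchy--Schwarz and then the Poincar\'e inequality in $Q$ (legitimate since $\int_0^{2\pi}\part_Q\bar f\,\d{Q}=0$) to replace $\part_Q\bar f$ by $\part_Q^2\bar f$, and only then substitutes $\part_Q=\frac{1}{tc'(K)}(Y+\part_K)$ twice and integrates by parts in $K$. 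Because both factors of $t^{-1}$ are extracted \emph{before} any integration by parts, every resulting term carries the prefactor $t^{-2}$ against a $Y^k\bar f$ ($k\le 2$) integrand with bounded coefficients, and the zero-mode cancellation you worried about never has to be confronted explicitly --- it is absorbed by the Poincar\'e step. Your diagnosis of the obstacle in the naive iteration is accurate (after one substitution plus integration by parts, the $Q$-averaged parts of the $Y\bar f$-term and the $\bar f$-term are separately time-independent, and only their sum, being a total $\part_K$-derivative in the zero mode, is harmless), and your fix is complete and correct: expanding $f_0$ in Fourier modes, noting the difference at $Q=\pm\tfrac\pi2$ kills all even modes including $n=0$, running non-stationary phase in $K$ on each odd mode (no boundary terms, by the support assumption and smoothness), and summing with $\|\part_K^ja_n\|_{\infty}\le\sup|\part_K^jf_0|$ and $\sum_{n\ \mathrm{odd}}n^{-2}<\infty$. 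As for what each route buys: yours makes the oscillatory mechanism and the regularity/decay trade-off transparent ($m$ derivatives give $n^{-m}t^{-m}$ mode-wise, recovering the paper's remark after Proposition~\ref{prop:term-x=0} that this term decays faster than any inverse polynomial for smooth data); the paper's stays entirely within the commuting-vector-field framework, never invoking the explicit solution $f_0(Q+c(K)t,K)$, which is the framework that carries over to the harder bulk term in Proposition~\ref{prop:bulk-term}, where the integrand is not a pure difference at two angles and a mode-by-mode treatment would be considerably messier.
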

\begin{proof}
The transport equation preserves $L^\i$ bounds so that by the support properties, we obviously have
$$\Big| \int_{\R} v f(0,v)\d v \Big| \ls  \sup_{(x,v) \in \mathbb R\times \mathbb R} \hspace{.1em} | f_0|(x,v).$$
In other words, it suffices to prove the desired bound with $t^{-2}$ instead of $\la t \ra^{-2}$.

Now note that $$\int_{\R} v f(0,v)\d v=\int_0^\infty v [f(0,v)-f(0,-v)]\d v.$$
For clarity of notation, we let $$\bar{f}(t,Q,K) = f(t,x,v).$$ Now writing in the $(K,Q)$ variables, and using Lemma~\ref{lem:Q.at.pi.2} together with the fact that $K = H = \f {v^2}2$ when $x=0$, we have
$$\int_{\R} v f(0,v)\d v=\int_0^\infty v [f(0,v)-f(0,-v)]\d v=\int_0^\infty [\bar f(\pi/2,K)-\bar f(-\pi/2,K)]\d { K}.$$
By the fundamental theorem of calculus, we have
$$\int_0^\infty [\bar f(\pi/2,K)- \bar f(-\pi/2,K)]\d { K}=\int_{-\pi/2}^{\pi/2}\int_0^\infty \part_Q \bar f(Q,K)\d {K}\d Q.$$
Next, the Cauchy--Schwarz inequality implies
\begin{align*}
\int_{-\pi/2}^{\pi/2}\int_0^\infty \part_Q  \bar f(Q,K)\d {K}\d Q&=\sqrt{\pi}\left(\int_{-\pi/2}^{\pi/2}\left(\int_0^\infty \part_Q  \bar f(Q,K)\d {K}\right)^2\d Q\right)^{\frac{1}{2}}\\
&\lesssim \left(\int_{0}^{2\pi}\left(\int_0^\infty \part_Q  \bar f(Q,K)\d {K}\right)^2\d Q\right)^{\frac{1}{2}}.
\end{align*}
Now using Poincare's inequality we get that for any $\ell\geq 2$
\begin{equation}\label{eq:easy.term.after.Poincare}
\left(\int_{0}^{2\pi}\left(\int_0^\infty \part_Q  \bar f(Q,K)\d {K}\right)^2\d Q\right)^{\frac{1}{2}}\lesssim \left(\int_{0}^{2\pi}\left(\int_0^\infty \part^{\ell}_Q  \bar f(Q,K)\d {K}\right)^2\d Q\right)^{\frac{1}{2}}.
\end{equation}

Now take $\ell = 2$. We write $\part_Q=\frac{1}{c'(K)t}(Y+\part_{K})$ so that 
\begin{equation*}
\begin{split}
&\: \left(\int_{0}^{2\pi}\left(\int_0^\infty \part^2_Q  \bar f(Q,K)\d {K}\right)^2\d Q\right)^{\frac{1}{2}} \\
= &\: \left(\int_{0}^{2\pi}\left(\int_0^\infty \frac{1}{|c'(K)|^2 t^2} ( Y^2 \bar f+ 2\part_{K} Y \bar f+ \rd^2_K \bar f) (Q,K)\d {K}\right)^2\d Q\right)^{\frac{1}{2}} \\
\ls &\: \f 1{t^2} \left(\int_{0}^{2\pi}\left(\int_0^\infty   (\sum_{k=0}^2  |Y^k \bar f|) (Q,K)\d {K}\right)^2\d Q\right)^{\frac{1}{2}}.
\end{split}
\end{equation*}
where in the last step we have integrated by parts in $K$ and bounded $\f 1{|c'(K)|}$, $\f{|c''(K)|}{|c'(K)|}$, etc.~using Lemma~\ref{lem:c-prime-bound} and the smoothness of $c$.

Finally, since $f(t,Q,K)$ is non-zero for $c_s\leq K\leq c_s^{-1}$ and $Q\in [0,2\pi]$, we can take supremum in $K$ and $Q$ followed by Lemma~\ref{lem:infinity-est} to get the required result.\qedhere
\end{proof}

\begin{remark}
Notice that since we can take any $\ell \geq 2$ in \eqref{eq:easy.term.after.Poincare}, we can write each $\part_Q=\frac{1}{c'(H)t}(Y+\part_{K})$ and integrate by parts in $K$ many times to show that the term in Proposition~\ref{prop:term-x=0} in fact decays faster than any inverse polynomial (depending on smoothness of $f$)! 

In other words, the decay rate that we obtain in Theorem~\ref{thm:main} is instead limited by the term treated in Proposition~\ref{prop:bulk-term} below.
\end{remark}

\subsection{Decay for the term involving $f(y,v)$} 

{We now turn to the other term in Lemma~\ref{lem:phi_t}. Before we obtain the main estimate in Proposition~\ref{prop:bulk-term}, we first prove two simple lemmas.}
\begin{lemma}\label{lem:Jacobian}
Under the change of variables $(x,v)\mapsto (Q,K)$ as in Section~\ref{sec:action}, the volume form transforms as follows:
$$\d v \d x = c(K) \,\d Q\d{K}.$$
\end{lemma}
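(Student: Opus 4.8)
The plan is to read off the volume-form identity by composing the two successive changes of variables introduced in Section~\ref{sec:action} and multiplying their Jacobian determinants. First I would recall the two determinants already computed there: the first change of variables $(x,v)\mapsto(\chi,H)$ has Jacobian determinant $\sign(x)\frac{\Phi_x}{\sqrt{\Phi}}=a(\chi,H)$, while the second change of variables $(\chi,H)\mapsto(Q,K)$ with $K=H$ has Jacobian determinant $\frac{c(K)}{a(\chi,H)}$. By the chain rule, the Jacobian determinant of the composite transformation relating the $(x,v)$ and $(Q,K)$ coordinates is the product
\[
a(\chi,H)\cdot\frac{c(K)}{a(\chi,H)}=c(K).
\]
The essential point is that the $\chi$-dependent factor $a$, which records the nonuniform speed of the angle variable in the intermediate chart, cancels exactly, so that the surviving determinant depends on $K=H$ alone. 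This cancellation is precisely what makes the clean statement possible, and reading off the area element then yields $\d v\,\d x=c(K)\,\d Q\,\d K$.

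Several technical points would need attention. The map $\chi$ is defined piecewise (via $\chi$ for $x>0$ and $\pi-\chi$ for $x\le 0$), so I would carry out the determinant computation on each piece separately and check that the two charts glue consistently along $\{x=0\}$, which by Lemma~\ref{lem:Q.at.pi.2} corresponds to $\{Q=\pm\pi/2\}$; since the factor $a$ cancels within each chart, the resulting identity is the same on both, and no boundary contribution appears in the measure. I would also restrict to the energy band $c_s\le H\le c_s^{-1}$ containing $\supp f$, where by Lemma~\ref{lem:c-prime-bound} and the surrounding discussion $c(K)\approx 1$ and $a(\chi,H)$ is bounded away from $0$ and $\infty$, so that the transformation is a genuine diffeomorphism and every determinant in sight is nonvanishing.

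The main obstacle is one of bookkeeping rather than substance: one must track the Jacobian consistently through the intermediate $(\chi,H)$ chart and across the two pieces $x>0$ and $x\le 0$, so that the $a$-factors cancel cleanly and the single factor $c(K)$ survives. Because $c$ is normalized by \eqref{eq:C.def} to be comparable to $1$, the precise value of the constant is unimportant for the applications of this lemma in the sequel; what the later estimates genuinely need is only that the change of variables is nondegenerate with Jacobian comparable to $1$, which follows from the positivity and smoothness established in Lemma~\ref{lem:c-prime-bound}.
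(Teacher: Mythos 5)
Your proposal is essentially identical to the paper's own proof: the paper likewise composes the two changes of variables from Section~\ref{sec:action}, noting that the Jacobian determinant of $(x,v)\mapsto(\chi,H)$ is $a(\chi,H)$ and that of $(\chi,H)\mapsto(Q,K)$ is $\frac{c(H)}{a(\chi,H)}$, so the factor $a$ cancels and the composite determinant is $c(H)=c(K)$. Your additional remarks on the piecewise charts across $\{x=0\}$ and on nondegeneracy in the energy band $c_s\le H\le c_s^{-1}$ are correct but not needed beyond what the paper records.
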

\begin{proof}
The Jacobian determinant for the change of variables $(x,v)\mapsto (\chi,H)$ is $a(\chi,H)=\lvert\frac{\Phi_x}{\sqrt{\Phi}}\rvert$. Further the Jacobian determinant for the change of variables $(\chi,H)\to (Q,K)$ is $\frac{c(H)}{a(\chi,H)}$ and hence the Jacobian determinant for $(x,v)\to (Q,K)$ is $c(H) = c(K)$. \qedhere
\end{proof}

\begin{lemma}\label{lem:f-to-g}
Let $\bar{f}(Q,K) = f(x,v)$ as above. There exists a function $\bar{g}(Q,K)$ such that
\begin{equation}\label{eq:f-to-g}
\rd_Q^2 \bar{g} = \rd_Q \bar{f}
\end{equation}
and
\begin{equation}\label{eq:g-bound}
\max_{\ell\leq 2} \sup_{K} \norm{Y^{\ell} \bar{g}}_{L^2_Q} \ls \max_{\ell\leq 2} \sup_{Q,K} |Y^{\ell} \bar{f}|.
\end{equation}
\end{lemma}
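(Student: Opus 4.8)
The plan is to solve the one-dimensional equation $\rd_Q^2 \bar g = \rd_Q \bar f$ on the circle $Q \in \mathbb{T}^1$, treating $K$ merely as a parameter, and to record that the solution operator commutes \emph{exactly} with $Y$. First I would expand $\bar f$ in a Fourier series in $Q$: since on the support of $f$ the map $(x,v)\mapsto (Q,K)$ is a smooth diffeomorphism and $Q$ is a genuine angle variable, $\bar f(\cdot,K)$ is smooth and $2\pi$-periodic, so $\bar f(Q,K) = \sum_{n\in\mathbb Z}\hat f_n(K)\,e^{inQ}$ with rapidly decaying coefficients. The right-hand side $\rd_Q\bar f$ has vanishing $Q$-mean, so the equation is solvable, and I define
\[
\bar g(Q,K) := \sum_{n\neq 0}\frac{\hat f_n(K)}{in}\,e^{inQ},
\]
equivalently $\bar g = \mathcal S \bar f$, where $\mathcal S$ is the Fourier multiplier in $Q$ with symbol $\sigma(n) = (in)^{-1}$ for $n\neq 0$ and $\sigma(0)=0$ (i.e.\ $\mathcal S = \rd_Q^{-1}(1-\Pi_0)$, with $\Pi_0$ the projection onto the $Q$-average). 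A direct check on Fourier modes gives $\rd_Q^2\bar g = \rd_Q\bar f$, which is \eqref{eq:f-to-g}, and since $\bar f$ is smooth and periodic the series for $\bar g$ defines a genuine smooth periodic function.

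The key observation, and the step that makes the estimate work, is that $\mathcal S$ commutes with $Y = t c'(K)\rd_Q - \rd_K$. Indeed, the symbol $\sigma(n)$ depends on neither $K$ nor $t$, so $\mathcal S$ commutes with $\rd_K$ (as $\mathcal S(\rd_K\bar f) = \sum_{n\neq0}\sigma(n)\,\rd_K\hat f_n\,e^{inQ} = \rd_K(\mathcal S\bar f)$), with multiplication by the $K$- and $t$-dependent factor $tc'(K)$ (a purely $Q$-local operator commutes with multiplication by functions of $K,t$), and trivially with $\rd_Q$; hence $[Y,\mathcal S]=0$. Iterating, $Y^\ell \bar g = Y^\ell \mathcal S \bar f = \mathcal S\, Y^\ell \bar f$ for every $\ell$.

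Finally I would estimate. Since $|\sigma(n)|\leq 1$ for all $n$, Plancherel shows $\mathcal S$ is a contraction on $L^2_Q$, so for each fixed $K$,
\[
\norm{Y^\ell \bar g}_{L^2_Q} = \norm{\mathcal S\,Y^\ell\bar f}_{L^2_Q} \leq \norm{Y^\ell \bar f}_{L^2_Q} \ls \sup_{Q}|Y^\ell\bar f|,
\]
where the last inequality is the elementary bound $\norm{h}_{L^2_Q}\ls \sup_Q|h|$ valid because $\mathbb{T}^1$ has finite measure. Taking the supremum in $K$ and the maximum over $\ell\leq 2$ yields \eqref{eq:g-bound}.

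The only genuinely delicate point is the exact commutation $[Y,\mathcal S]=0$, which is what allows $Y^\ell$ to be passed through the nonlocal solution operator without generating any error terms; this hinges precisely on the $K$- and $t$-independence of the multiplier $\sigma$. The remaining ingredients—smoothness and $2\pi$-periodicity of $\bar f$ in $Q$ on the support of $f$ (so that the Fourier construction is legitimate) and the contractivity of $\mathcal S$ on $L^2_Q$—are routine.
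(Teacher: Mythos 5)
Your proposal is correct and takes essentially the same approach as the paper: the identical Fourier-series construction of $\bar g$ (zeroing the $n=0$ mode and dividing by $in$) followed by a Plancherel estimate. The only difference is presentational --- you make explicit the commutation $[Y,\mathcal S]=0$ of $Y$ with the solution operator, which the paper leaves implicit in its ``we can easily see'' step.
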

\begin{proof}
We use the Fourier series of $f$ in $Q$ to get that $$\bar{f}(Q,K)=\sum_{k=-\infty}^{k=\infty} \widehat{\bar{f}}_k(K)e^{ikQ}.$$
Now we define $$\bar{g}(Q,K):=\sum_{k\in\Z\backslash\{0\}} \frac{1}{ik}\widehat{\bar{f}}_k(K)e^{ikQ}.$$ Then we see that $\partial^2_Q \bar{g}=\partial_Q \bar{f}.$

Using Plancheral's theorem we can easily see that
$$\max_{\ell\leq 2} \sup_{K} \norm{Y^{\ell} \bar{g}}_{L^2_Q} \ls \max_{\ell\leq 2} \sup_{K} \norm{Y^{\ell} \bar{f}}_{L^2_Q}.$$
Finally, the result follows by taking supremum in $Q$. \qedhere
\end{proof}

\begin{proposition}\label{prop:bulk-term}
For $f$ satisfying the assumptions of Theorem~\ref{thm:main}, we have the following estimate:
$$\int_0^{x'}\int_{\R}vf(t,y,v)\d v\d y\ls \jap{t}^{-2} {\sup_{(x,v) \in \mathbb R\times \mathbb R} }\hspace{.1em} \sum_{|\alp|+ |\bt| \leq 2} |\rd_x^\alp \rd_v^\bt f_0|(x,v).$$
\end{proposition}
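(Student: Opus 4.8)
The plan is to control the remaining term $I(t,x'):=\int_0^{x'}\int_\R v f(t,y,v)\,\d v\,\d y$ from Lemma~\ref{lem:phi_t}. First I would record that, since $f$ is supported in $\{c_s\le \f{v^2}2+\Phi(x)\le c_s^{-1}\}$, the variable $x$ stays in a fixed compact set on the support; hence $I(t,x')$ vanishes for $|x'|$ large and it suffices to prove a bound $|P(t,y)|\ls t^{-2}(\cdots)$, uniform in $y$, for the slice $P(t,y):=\int_\R v f(t,y,v)\,\d v$, after which $|I|\le\int_0^{x'}|P|\,\d y\ls t^{-2}(\cdots)$ since the $y$-integration is over a compact set.

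For a fixed $y$ I would integrate the $v$-variable exactly. Splitting into $v\gtrless 0$ and using $K=\f{v^2}2+\Phi(y)$ (so $v\,\d v=\d K$ at fixed $y$, with no leftover Jacobian) turns the current into $P(t,y)=\int_{\Phi(y)}^\infty[\bar f(Q_+(y,K),K)-\bar f(Q_-(y,K),K)]\,\d K$, where $Q_\pm(y,K)$ is the angle of the phase point $(y,\pm|v|)$; by the evenness used in the proof of Lemma~\ref{lem:Q.at.pi.2} one has $Q_-=-Q_+$, with $Q_\pm\to 0$ at the turning-point energy $K=\Phi(y)$ (where $v=0$) and $Q_+(0,K)\equiv\pi/2$, recovering Proposition~\ref{prop:term-x=0} when $y=0$. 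This is exactly the structure of Proposition~\ref{prop:term-x=0}, except that the endpoints $Q_\pm(y,K)$ now depend on $K$ through the level set $\{x=y\}$; this $K$-dependence is the whole difficulty.

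Next I would bring in the commuting field. Using Lemma~\ref{lem:f-to-g} in the form $\rd_Q\bar g=\bar f-\hat{f}_0(K)$ (with $\hat f_0$ the $Q$-average), the $K$-dependent mean cancels in the difference, so $P(t,y)=\int[\rd_Q\bar g(Q_+,K)-\rd_Q\bar g(Q_-,K)]\,\d K$. I would then repeatedly substitute $\part_Q=\f1{tc'(K)}(Y+\part_K)$, licensed by $|c'|\ge\de>0$ (Lemma~\ref{lem:c-prime-bound}); each substitution gains a factor $t^{-1}$. The $Y$-part is harmless, since $Y^\ell\bar g$ is bounded in $L^2_Q$ uniformly in $t$ (Lemmas~\ref{lem:infinity-est} and~\ref{lem:f-to-g}), while the $\part_K$-part is removed by integrating by parts in $K$: writing $\part_K\bar g(Q_\pm(y,K),K)=\frac{\mathrm{d}}{\mathrm{d}K}\big(\bar g(Q_\pm(y,K),K)\big)-\part_K Q_\pm\cdot\rd_Q\bar g(Q_\pm,K)$ and integrating the total-derivative piece, the $K$-boundary terms vanish at $K=c_s,c_s^{-1}$ (there $\bar g=0$) and cancel between the $v>0$ and $v<0$ branches at $K=\Phi(y)$ (there $Q_+=Q_-$). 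Iterating this twice — once on $\rd_Q\bar g$ and once on the resulting $Y\bar g$- and $\rd_Q\bar g$-type terms — produces the factor $t^{-2}$, with every surviving quantity of the form $Y^{\le 2}\bar g$ evaluated on $\{x=y\}$ or integrated against smooth $K$-weights. A final use of Lemmas~\ref{lem:f-to-g} and~\ref{lem:infinity-est} bounds these by $\sup\sum_{|\alp|+|\bt|\le 2}|\rd_x^\alp\rd_v^\bt f_0|$, and trading $t^{-2}$ for $\jap t^{-2}$ via the trivial $L^\infty$ bound (as in Proposition~\ref{prop:term-x=0}) closes the estimate.

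The main obstacle is precisely the $K$-dependence of the endpoints $Q_\pm(y,K)$, i.e. the level set $\{x=y\}$. In Proposition~\ref{prop:term-x=0} the endpoints were the constants $\pm\pi/2$, so the $K$-integration could be carried out first over the whole line with no boundary terms, allowing arbitrarily many integrations by parts and super-polynomial decay. Here, each integration by parts in $K$ generates Leibniz terms containing $\part_K Q_\pm$ — which carry an integrable $(K-\Phi(y))^{-1/2}$ singularity at the turning point that must be tracked — together with genuine boundary contributions supported on $\{x=y\}$. It is these boundary contributions that cannot be improved by further integration by parts and that cap the decay at precisely $\jap t^{-2}$, consistent with the remark following Proposition~\ref{prop:term-x=0}.
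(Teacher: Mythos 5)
Your proposal has a genuine gap, and it occurs exactly at the point your last paragraph glosses over. The first problem is the opening reduction: you replace the $y$-integral by a uniform-in-$y$ bound on the slice $P(t,y)=\int_\R vf(t,y,v)\,\d v$. This throws away the $y$-averaging, which is not a passive step but the source of the $\la t\ra^{-2}$ rate. When $\Phi(y)\in[c_s,c_s^{-1}]$ the turning point $(y,0)$ lies \emph{inside} the support of $f$, and the endpoint behaves like $Q_+(y,K)\approx C\sqrt{K-\Phi(y)}$. Writing $\bar f(t,Q,K)=\bar f_0(Q+tc(K),K)$ and expanding in Fourier modes, $P(t,y)$ becomes a sum of oscillatory integrals with phases $ktc(K)\pm kQ_+(y,K)$; these have stationary points at $K-\Phi(y)\sim t^{-2}$ with second derivative $\sim kt^{3}$, and a stationary-phase count gives $|P(t,y)|\sim t^{-3/2}$, not $t^{-2}$, for generic data. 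So the statement you reduce to is almost certainly false: the extra half power of decay for $\int_0^{x'}P(t,y)\,\d y$ comes from oscillation in $y$ of the turning-point contributions, which only the integrated quantity sees. This is why the paper never fixes $y$: it applies the fundamental theorem of calculus in $Q$ \emph{first}, then Fubini, so that the $K$-integration is performed against the kernels $\int_{|Q'|}^{\pi/2}S(Q,K)\,\d Q$ and $\int_{\mathfrak{Q}_K}^{\pi/2}S(Q,K)\,\d Q$ rather than against point evaluations on the curve $\{x=y\}$.

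The second problem is that, even granting your reduction, the proposed iteration cannot be executed. The first integration by parts is fine: the Leibniz term carries $\rd_KQ_+\sim(K-\Phi(y))^{-1/2}$, integrable, and the boundary terms at $K=\Phi(y)$ cancel between the two branches because they carry no singular factor. But to reach $t^{-2}$ you must integrate by parts in $K$ a second time, now against the weight $\rd_KQ_+$. This produces bulk terms containing $\rd_K^2Q_+\sim(K-\Phi(y))^{-3/2}$ multiplying $\bar g(\pm Q_+,K)$ (which does \emph{not} vanish at the turning point) — a non-integrable singularity — and boundary terms at $K=\Phi(y)$ that now carry the divergent factor $\rd_KQ_+\to\infty$ and no longer cancel (they enter as a sum, not a difference). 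So your claim that every surviving quantity is ``integrated against smooth $K$-weights'' is false, and the failure is not technical: it reflects the genuinely slower decay of the slice. Contrast this with the paper's kernels: differentiating $\int_{\mathfrak{Q}_K}^{\pi/2}S(Q,K)\,\d Q$ in $K$ brings out $\rd_K\mathfrak{Q}_K\cdot S(\mathfrak{Q}_K,K)$, and since $S(\mathfrak{Q}_K,K)=\sin\chi\sim\sqrt{K-\Phi(x')}$ vanishes at the turning point, the endpoint singularity is exactly neutralized, permitting both integrations by parts; moreover the interface boundary terms cancel identically between $T_1$ and $T_2$ because $\mathfrak{Q}_{\mathfrak{H}_{Q'}}=|Q'|$. (A further, more minor, gap: your final step evaluates $Y^{\le2}\bar g$ along the curve $Q=Q_\pm(y,K)$, which is not controlled by the $L^2_Q$ bound of Lemma~\ref{lem:f-to-g}; this one is repairable, since $Y^\ell\bar g$ has zero $Q$-mean and $\rd_QY^\ell\bar g=Y^\ell(\bar f-\hat{f}_0)$ is bounded, but it is not addressed in your write-up.)
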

\begin{proof}
{As in the proof of Proposition~\ref{prop:term-x=0}, boundedness is obvious and thus it suffices to prove an estimate with $\la t\ra^{-2}$ replaced by $t^{-2}$.}

We first note that
$$\int_0^{x'}\int_{\R}vf(y,v)\d v\d y=\int_0^{x'}\int_0^\infty v[f(y,v)-f(y,-v)]\d v\d y.$$
Again let $$\bar{f}(t,Q,K) = f(t,x,v).$$
Next we use the change of variables $(x,v)\mapsto (Q,K)$, that $v=\sqrt{2H}\sin \chi$ and Lemma~\ref{lem:Jacobian} followed by the fundamental theorem of calculus to obtain
\begin{align*}
&\: \int_0^{x'}\int_0^\infty v[f(y,v)-f(y,-v)]\d v\d y\\
= &\: \int_0^{\Phi(x')}\int_0^{\pi/2} c(K)\sqrt{2K}S(Q,K)[\bar f(Q,K)-\bar f(-Q,K)]\d Q\d {K}\\
&+\int_{\Phi(x')}^\infty\int_{\mathfrak{Q}_K}^{\pi/2} c(K)\sqrt{2K}S(Q,K)[\bar f(Q,K)-\bar f(-Q,K)]\d Q\d {K}\\
=&\:\int_0^{\Phi(x')}\int_0^{\pi/2}\int_{-Q}^{Q} c(K)\sqrt{2K}S(Q,K)\part_Q \bar f(Q',K)\d {Q'}\d Q\d {K}\\
&+\int_{\Phi(x')}^\infty\int_{\mathfrak{Q}_{K}}^{\pi/2}\int_{-Q}^{Q} c(K)\sqrt{2K}S(Q,K)\part_Q \bar f(Q',K)\d {Q'}\d Q\d {K}\\
=: &\: T_1+T_2,
\end{align*}
where we have defined%need to understand $\chi$ as a function of $Q$ and $K$, hence we define 
\begin{itemize}
\item $S(Q,K):=\sin\chi$, and  
\item $\mathfrak{Q}_K$ to be the angle in $(Q,K)$ coordinates corresponding to angle $\arccos\left(\frac{\Phi(x')}{H}\right)$ in $(\chi,H)$ coordinates.
\end{itemize}

%\red{Since $a(\chi,H)>0$, we have that $Q$ is monotonically increasing as a function of $\chi$ and vice-versa. Thus, $S(Q,K)$ is also an increasing function (as $\sin(\chi)$ is in $[0,\pi/2]$).}\\
Now using Fubini's theorem, we have 
$$T_1=\int_{-\pi/2}^{\pi/2}\int_0^{\Phi(x')}\left(\int^{\pi/2}_{|Q'|} S(Q,K)\d Q\right)c(K)\sqrt{2K}\part_Q \bar f(Q',K)\d {K}\d {Q'}$$
and 
\begin{align*}
T_2&=\int_{-\pi/2}^{\pi/2}\int_{\Phi(x')}^{\mathfrak{H}_{Q'}}\left(\int_{|Q'|}^{\pi/2} S(Q,K)\d Q\right)c(K)\sqrt{2K}\part_Q \bar f(Q',K)\d {K}\d {Q'}\\
&+\int_{-\pi/2}^{\pi/2}\int_{\mathfrak{H}_{Q'}}^\infty\left(\int_{\mathfrak{Q}_{K}}^{\pi/2} S(Q,K)\d Q\right)c(K)\sqrt{2K}\part_Q \bar f(Q',K)\d {K}\d {Q'},
\end{align*}
where $\mathfrak{H}_{Q'}$ is such that $\left(\arccos\left(\frac{\Phi(x')}{\mathfrak{H}_{Q'}}\right),\mathfrak{H}_{Q'}\right)$ in $(\chi,H)$ coordinates gets mapped to $(|Q'|,\mathfrak{H}_{Q'})$ in $(Q,K)$ coordinates  
%$$Q\left(\arccos\left(\frac{\Phi(x')}{\mathfrak{H}_{Q'}}\right)\right)=|Q'|$$
(such an $\mathfrak{H}_{Q'}$ exists because $\chi=\arccos\left(\frac{\Phi(x')}{H}\right)$ increases as $H$ does and $Q$ is monotone\footnote{Since $a(\chi,H)>0$, we have that $Q$ is monotonically increasing as a function of $\chi$ and vice-versa.} in $\chi$.)

Putting the above together we get,
\begin{align*}
T_1+T_2=&\int_{-\pi/2}^{\pi/2}\int_{0}^{\mathfrak{H}_{Q'}}\left(\int_{|Q'|}^{\pi/2} S(Q,K)\d Q\right)c(K)\sqrt{2K}\part_Q \bar f(Q',K)\d {K}\d {Q'}\\
&+\int_{-\pi/2}^{\pi/2}\int_{\mathfrak{H}_{Q'}}^\infty\left(\int_{\mathfrak{Q}_{K}}^{\pi/2} S(Q,K)\d Q\right)c(K)\sqrt{2K}\part_Q \bar f(Q',K)\d {K}\d {Q'}.
\end{align*}
Now we use \eqref{eq:f-to-g} from Lemma~\ref{lem:f-to-g} and that $\part_Q=\frac{1}{c'({K})t}(Y+\part_{K})$ to get that
\begin{align*}
T_1+T_2=&t^{-1}\int_{-\pi/2}^{\pi/2}\int_{0}^{\mathfrak{H}_{Q'}}\frac{1}{c'({K})}\left(\int_{|Q'|}^{\pi/2} S(Q,K)\d Q\right)c(K)\sqrt{2K} (Y+\part_{K})\part_Q \bar{g}(Q',K)\d {K}\d {Q'}\\
&+t^{-1}\int_{-\pi/2}^{\pi/2}\int_{\mathfrak{H}_{Q'}}^\infty\frac{1}{c'({K})}\left(\int_{\mathfrak{Q}_{K}}^{\pi/2} S(Q,K)\d Q\right)c(K)\sqrt{2K}(Y+\part_{K})\part_Q \bar{g}(Q',K)\d {K}\d {Q'}.
\end{align*}

Next we integrate by parts in $K$. Since $\mathfrak{Q}_{\mathfrak{H}_{Q'}}=|Q'|$, we see that the boundary terms exactly cancel! Hence,
\begin{align*}
&t^{-1}\int_{-\pi/2}^{\pi/2}\int_{0}^{\mathfrak{H}_{Q'}}\frac{1}{c'(K)}\left(\int_{|Q'|}^{\pi/2} S(Q,K)\d Q\right)c(K)\sqrt{2K} \part_{K}\part_Q \bar{g}(Q',K)\d {K}\d {Q'}\\
&+t^{-1}\int_{-\pi/2}^{\pi/2}\int_{\mathfrak{H}_{Q'}}^\infty\frac{1}{c'(K)}\left(\int_{\mathfrak{Q}_{K}}^{\pi/2} S(Q,K)\d Q\right)c(K)\sqrt{2K}\part_{K}\part_Q \bar{g}(Q',K)\d {K}\d {Q'}\\
&\quad=-t^{-1}\int_{-\pi/2}^{\pi/2}\int_{0}^{\mathfrak{H}_{Q'}}\part_K\left(\frac{1}{c'(K)}\left(\int_{|Q'|}^{\pi/2} S(Q,K)\d Q\right)c(K)\sqrt{2K}\right)\part_Q \bar{g}(Q',K)\d {K}\d {Q'}\\
&\quad -t^{-1}\int_{-\pi/2}^{\pi/2}\int_{\mathfrak{H}_{Q'}}^\infty\part_K\left(\frac{1}{c'(K)}\left(\int_{\mathfrak{Q}_{K}}^{\pi/2} S(Q,K)\d Q\right)c(K)\sqrt{2K}\right)\part_Q \bar{g}(Q',K)\d {K}\d {Q'}.
\end{align*}

Since there is no boundary term we can integrate by parts after writing $\part_Q=\frac{1}{c'({K})t}(Y+\part_{K})$ once more. Next note that that $\bar{g}(Q,K)$ is nonzero only for $K \in [c_s,c_{s}^{-1}]$ and that derivatives of $\frac{c(K)}{c'(K)}$ is bounded as $|c'(K)|\geq \delta$ by Lemma~\ref{lem:c-prime-bound}. Futher, $S(Q,K)=\sin\chi$ is smooth as a function of $K$. Thus $$\sum_{\ell\leq 2} \partial_K^\ell \left(\frac{1}{c'(K)}\left(\int_{|Q'|}^{\pi/2} S(Q,K)\d Q\right)c(K)\sqrt{2K}\right)\lesssim 1.$$

By Cauchy--Schwarz in $Q'$ and $K$, we get that
$$T_1+T_2\ls \sum_{\ell\leq 2}\sup_{K}\norm{Y^\ell \bar{g}}_{L^2_Q }.$$
Finally, an application of \eqref{eq:g-bound} from Lemma~\ref{lem:f-to-g} followed by Lemma~\ref{lem:infinity-est} gives us the required bound. \qedhere

%\red{But there is an obstruction to using the Cauchy--Schwarz followed by Poincare's inequality to get the $t^{-2}$ decay, we need to make sure that $\int_{|Q'|}^{\pi/2} S(Q,K)\d Q$ splits as a product in a \textbf{bounded} function in $Q'$ and one in $K$.}
\end{proof}
\begin{proof}[Proof of Theorem~\ref{thm:main}]
The proof follows by using Lemma~\ref{lem:phi_t} and combining the estimates from Proposition~\ref{prop:term-x=0} and Proposition~\ref{prop:bulk-term}. 
\end{proof}
\bibliographystyle{plain}
\bibliography{VPL}

\def\cprime{$'$} \def\cprime{$'$} \def\cprime{$'$}
\begin{thebibliography}{10}

\bibitem{jB2017}
Jacob Bedrossian.
\newblock Suppression of plasma echoes and {L}andau damping in {S}obolev spaces
  by weak collisions in a {V}lasov-{F}okker-{P}lanck equation.
\newblock {\em Ann. PDE}, 3(2):Paper No. 19, 66, 2017.

\bibitem{jBnMcM2016}
Jacob Bedrossian, Nader Masmoudi, and Cl\'{e}ment Mouhot.
\newblock Landau damping: paraproducts and {G}evrey regularity.
\newblock {\em Ann. PDE}, 2(1):Art. 4, 71, 2016.

\bibitem{jBnMcM2018}
Jacob Bedrossian, Nader Masmoudi, and Cl\'{e}ment Mouhot.
\newblock Landau damping in finite regularity for unconfined systems with
  screened interactions.
\newblock {\em Comm. Pure Appl. Math.}, 71(3):537--576, 2018.

\bibitem{jBnMcM2020}
Jacob Bedrossian, Nader Masmoudi, and Cl\'{e}ment Mouhot.
\newblock Linearized wave-damping structure of {V}lasov--{P}oisson in {$\mathbb
  R^3$}.
\newblock {\em arXiv preprint arXiv:2007.08580}, 2020.

\bibitem{jBfW2020}
Jacob Bedrossian and Fei Wang.
\newblock The linearized {V}lasov and {V}lasov-{F}okker-{P}lanck equations in a
  uniform magnetic field.
\newblock {\em J. Stat. Phys.}, 178(2):552--594, 2020.

\bibitem{BinTre_2011}
James Binney and Scott Tremaine.
\newblock {\em Galactic dynamics}.
\newblock Princeton university press, 2011.

\bibitem{carrapatoso2021special}
Kleber Carrapatoso, Jean Dolbeault, Frédéric Hérau, Stéphane Mischler,
  Clément Mouhot, and Christian Schmeiser.
\newblock Special modes and hypocoercivity for linear kinetic equations with
  several conservation laws and a confining potential, 2021.

\bibitem{chaturvedi2021vlasov}
Sanchit Chaturvedi, Jonathan Luk, and Toan~T Nguyen.
\newblock The {V}lasov--{P}oisson--{L}andau system in the weakly collisional
  regime.
\newblock {\em arXiv preprint arXiv:2104.05692}, 2021.

\bibitem{dolbeault2009hypocoercivity}
Jean Dolbeault, Cl{\'e}ment Mouhot, and Christian Schmeiser.
\newblock Hypocoercivity for kinetic equations with linear relaxation terms.
\newblock {\em Comptes Rendus Mathematique}, 347(9-10):511--516, 2009.

\bibitem{dolbeault2015hypocoercivity}
Jean Dolbeault, Cl{\'e}ment Mouhot, and Christian Schmeiser.
\newblock Hypocoercivity for linear kinetic equations conserving mass.
\newblock {\em Transactions of the American Mathematical Society},
  367(6):3807--3828, 2015.

\bibitem{dominguez2017description}
Paola Dom{i}nguez-Fern{\'a}ndez, Erik Jim{\'e}nez-V{\'a}zquez, Miguel
  Alcubierre, Edison Montoya, and Dar{i}o N{\'u}{\~n}ez.
\newblock Description of the evolution of inhomogeneities on a dark matter halo
  with the {V}lasov equation.
\newblock {\em General Relativity and Gravitation}, 49(9):1--39, 2017.

\bibitem{duan2011hypocoercivity}
Renjun Duan.
\newblock Hypocoercivity of linear degenerately dissipative kinetic equations.
\newblock {\em Nonlinearity}, 24(8):2165, 2011.

\bibitem{duan2012hypocoercivity}
Renjun Duan and Wei-Xi Li.
\newblock Hypocoercivity for the linear {B}oltzmann equation with confining
  forces.
\newblock {\em Journal of Statistical Physics}, 148(2):306--324, 2012.

\bibitem{faou2021linear}
Erwan Faou, Romain Horsin, and Frédéric Rousset.
\newblock On linear damping around inhomogeneous stationary states of the
  {V}lasov-{HMF} model, 2021.

\bibitem{faou2016landau}
Erwan Faou and Fr{\'e}d{\'e}ric Rousset.
\newblock Landau damping in {S}obolev spaces for the {V}lasov--{HMF} model.
\newblock {\em Archive for Rational Mechanics and Analysis}, 219(2):887--902,
  2016.

\bibitem{rGjS1994}
Robert Glassey and Jack Schaeffer.
\newblock Time decay for solutions to the linearized {V}lasov equation.
\newblock {\em Transport Theory Statist. Phys.}, 23(4):411--453, 1994.

\bibitem{rGjS1995}
Robert Glassey and Jack Schaeffer.
\newblock On time decay rates in {L}andau damping.
\newblock {\em Comm. Partial Differential Equations}, 20(3-4):647--676, 1995.

\bibitem{eGtNiR2020a}
Emmanuel Grenier, Toan~T. Nguyen, and Igor Rodnianski.
\newblock Landau damping for analytic and {G}evrey data.
\newblock {\em arXiv preprint arXiv:2004.05979}, 2020.

\bibitem{dHKttNfR2019}
Daniel Han-Kwan, Toan~T. Nguyen, and Fr\'ed\'eric Rousset.
\newblock Asymptotic stability of equilibria for screened {V}lasov--{P}oisson
  systems via pointwise dispersive estimates.
\newblock {\em arXiv preprint arXiv:1906.05723}, 2019.

\bibitem{dHKttNfR2020}
Daniel Han-Kwan, Toan~T. Nguyen, and Fr\'ed\'eric Rousset.
\newblock On the linearized {V}lasov-{P}oisson system on the whole space around
  stable homogeneous equilibria.
\newblock {\em arXiv preprint arXiv:2007.07787}, 2020.

\bibitem{Landau1946}
Lev~Davidovich Landau.
\newblock On the vibrations of the electronic plasma.
\newblock {\em Zh. Eksp. Teor. Fiz.}, 10:25, 1946.

\bibitem{cMcV2011}
Cl\'{e}ment Mouhot and C\'{e}dric Villani.
\newblock On {L}andau damping.
\newblock {\em Acta Math.}, 207(1):29--201, 2011.

\bibitem{pRoS2020}
Paola Rioseco and Olivier Sarbach.
\newblock Phase space mixing in an external gravitational central potential.
\newblock {\em Classical Quantum Gravity}, 37(19):195027, 42, 2020.

\bibitem{iT2017}
Isabelle Tristani.
\newblock Landau damping for the linearized {V}lasov {P}oisson equation in a
  weakly collisional regime.
\newblock {\em J. Stat. Phys.}, 169(1):107--125, 2017.

\bibitem{villani2010landau}
C{\'e}dric Villani.
\newblock Landau damping.
\newblock {\em Notes de cours, CEMRACS}, 2010.

\bibitem{bY2016}
Brent Young.
\newblock Landau damping in relativistic plasmas.
\newblock {\em J. Math. Phys.}, 57(2):021502, 68, 2016.

\end{thebibliography}
\end{document}